\documentclass[11pt,a4paper,twoside]{article}
\usepackage{amsmath, amsthm, amscd, amsfonts, amssymb, graphicx, color}
\usepackage[bookmarksnumbered, colorlinks, plainpages]{hyperref}
\usepackage{pdflscape}
\usepackage{blindtext}
\usepackage{float}
\input{mathrsfs.sty}

\linespread{1.35}
\textheight 23truecm \textwidth 16truecm
\setlength{\oddsidemargin}{0.15in}\setlength{\evensidemargin}{0.15in}
\setlength{\topmargin}{-.6cm}
\newtheorem{theorem}{Theorem}[section]
\newtheorem*{thma}{Theorem A}
\newtheorem*{thmb}{Theorem B}
\newtheorem*{thmc}{Theorem C}

\newtheorem{lemma}[theorem]{Lemma}

\theoremstyle{definition}

\theoremstyle{remark}

\begin{document}
\title{\Large {\bf On harmonic convolutions involving a vertical strip mapping}}
\author{Raj Kumar\,$^a$ \thanks{rajgarg2012@yahoo.co.in}\,\,,\, Sushma Gupta\,$^a$, Sukhjit Singh\,$^a$, and {Michael Dorff\,$^b$\thanks{mdorff@math.byu.edu} }\\
\emph{ \small $^a$\,Sant Longowal Institute of Engineering and Technology, Longowal-148106 (Punjab), India.}\\
\emph{ \small$^b$\, Department of Mathematics, Brigham Young University, Provo, Utah, 84602, USA.}}
\date{}
\maketitle
\begin{abstract}
Let $\displaystyle f_\beta=h_\beta+\overline{g}{_\beta}$ and $F_a=H_a+\overline{G}_a$ be harmonic mappings obtained by shearing of analytic mappings $\displaystyle h_\beta+g_\beta={1}/{(2i{\sin}\beta)}\log\left({(1+ze^{i\beta})}/{(1+ze^{-i\beta})}\right),\,\, 0<\beta<\pi$ and $\displaystyle H_a+G_a={z}/{(1-z)}$, respectively. Kumar et al. [\ref{ku and gu}] conjectured that if $\omega(z)=e^{i\theta}z^n (\theta\in\mathbb{R}\,\, n\in \mathbb{N})$ and $\displaystyle \omega_a(z)={(a-z)}/{(1-az)},\,a\in(-1,1)$ are dilatations of $f_\beta$ and $F_a$, respectively, then $F_a\ast f_\beta \, \in S_H^0$ and is convex in the direction of the real axis provided $a\in \displaystyle \left[{(n-2)}/{(n+2)},1\right)$. They claimed to have verified the result for $n=1,2,3$ and $4$ only. In the present paper, we settle the above conjecture in the affirmative for $\beta=\pi/2$ and for all $n\in \mathbb{N}$.  \\

\end{abstract}

\vspace{1mm}
{\small
{\bf Key Words}
: Univalent harmonic mapping, vertical strip mapping, harmonic convolution.

{\bf AMS Subject Classification:}  30C45.}

\section{Introduction}
Let $\mathcal{H}$ be the class of all complex valued harmonic functions $f$ defined in the unit disk $E = \{z: |z|<1\}$ and normalized by the conditions $f(0)=0$ and $f_z(0)=1$. Such harmonic mappings can be decomposed as $f=h+\overline g$, where $h$ is known as the analytic and $g$ the co-analytic part of $f$. A harmonic mapping $f=h+\overline g$ defined in $E$, is locally univalent and sense-preserving if and only if $h'\not=0$ in $E$ and the dilatation function $\omega,$ defined by $\displaystyle\omega={g'}/{h'}$, satisfies $|\omega(z)|<1$ for all $z\,\in E$. We denote by $S_H$ the class of all univalent and sense-preserving harmonic mappings in $\mathcal{H}$. Functions $f=h+\overline g$ in the class $S_H$ has the representation
\begin{equation} f(z) = z+ \sum _{n=2}^{\infty} a_nz^n + \sum _{n=1}^{\infty}\overline{ b}{_n}\overline{z}^n \,\,\,{\rm  for\,\, all}\,\, z\,\,in\,\, E. \end{equation}  The class of functions of the type (1) with $b_1=0$ is a subset of $S_H$ and will be denoted by $S_H^0$ here.
Further let $K_H$(respectively $K_H^0$) be the subclass of  $S_H$(respectively $S_H^0$) consisting of functions which map the unit disk $E$ onto convex domains. A domain $\Omega$ is said to be convex in the direction $\phi,\, 0 \leq \phi < \pi,$ if every line parallel to the line through $0$ and $ e ^{i \phi}$ has either connected or empty intersection with $\Omega$. In particular a domain convex in the direction of the real axis is denoted by CHD.\\
Let $\displaystyle f_\beta=h_\beta+\overline{g}{_\beta}$ be the collection of those harmonic mappings which are obtained by shearing (see Clunie and Sheil-Small [\ref{cl and sh}]) of analytic vertical strip mappings, \begin{equation}\displaystyle h_\beta+g_\beta=\frac{1}{2i{\sin}\beta}\log\left(\frac{1+ze^{i\beta}}{1+ze^{-i\beta}}\right),\end{equation} where $\displaystyle 0<\beta<\pi.$\\
Harmonic Convolution or simply convolution of two functions $F(z) = H(z) + \overline G(z)=z + \sum_{n=2}^\infty A_n z^n  +
   \sum_{n=1}^\infty{\overline B}{_n} {\overline z} {^n}$ and
$ f(z)= h(z)+ \overline g(z)  =z+\sum_{n=2}^\infty a_n z^n  +   \sum_{n=1}^\infty\overline
   {b}{_n}\overline{z}{^n}$ in $S_H$ is defined as,
$$ \begin{array}{clll}
(F {\ast} f)(z)&=&  (H {\ast} h)(z) +\overline{(G {\ast} g)}(z)\\
 &= & z+ \sum_{n=2}^\infty a_n A_n z^n + \sum_{n=1}^\infty\overline {{b}}{_n}\overline{{B}}{_n}\overline {z}{^n}.
\end{array} $$

The behaviour of the harmonic convolutions is not as nice as that of the analytic convolutions. Unlike the case of anlytic mappings, convolution of two convex harmonic mappings is not necessarily convex harmonic. But still convolutions of harmonic functions exhibit some interesting properties. For example Dorff [\ref{do}] proved that convolution of two harmonic right half-plane mappings is CHD. In the same paper he investigated convolution of mapping $f_{\beta}$ with right half-plane mapping and obtained the following:
\begin{thma} (See Dorff [\ref{do}]) Let $\displaystyle f=h+\overline{g}\in K_H^0$ with $\displaystyle h+g={z}/{(1-z)}$. Then $f\ast f_\beta\,\in S_H^0$ and is CHD, provided $f\ast f_\beta$ is locally univalent and sense-preserving, where $f_\beta$ is given by (2) and $\displaystyle \beta \in [\pi/2,\pi)$.\end{thma}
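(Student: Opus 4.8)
The plan is to pass from the harmonic mapping $f\ast f_\beta$ to a single associated analytic function by means of the shearing theorem of Clunie and Sheil-Small [\ref{cl and sh}], and then to verify the required one-directional convexity of that analytic function by a positivity estimate. Write $f\ast f_\beta=(h\ast h_\beta)+\overline{(g\ast g_\beta)}$. Since we are permitted to assume that $f\ast f_\beta$ is locally univalent and sense-preserving, the shearing theorem says that $f\ast f_\beta$ is univalent and CHD precisely when the analytic function
$$\Phi:=(h\ast h_\beta)-(g\ast g_\beta)$$
is univalent and maps $E$ onto a domain convex in the direction of the real axis. Two bookkeeping points come for free: because $f\in K_H^0\subset S_H^0$ its co-analytic part $g$ carries no linear term, so neither does $g\ast g_\beta$, which places $f\ast f_\beta$ in $S_H^0$ as soon as univalence is established; and the local univalence of $\Phi$ follows from that of $f\ast f_\beta$, since $\Phi'=(h\ast h_\beta)'\,(1-\widetilde\omega)$ with $\widetilde\omega=(g\ast g_\beta)'/(h\ast h_\beta)'$ satisfying $|\widetilde\omega|<1$. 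Everything therefore reduces to showing that $\Phi$ is CHD.

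The next step is to eliminate one of the two Hadamard products. Since $z/(1-z)$ is the identity for the convolution of functions vanishing at the origin and $h+g=z/(1-z)$, substituting $g=z/(1-z)-h$ gives $g\ast g_\beta=g_\beta-h\ast g_\beta$, whence
$$\Phi=h\ast(h_\beta+g_\beta)-g_\beta=h\ast s-g_\beta,\qquad s:=h_\beta+g_\beta.$$
Here $s$ is the analytic vertical-strip map of (2), and a short computation yields $s'(z)=1/(1+2z\cos\beta+z^2)$; for $\beta\in[\pi/2,\pi)$ the function $s$ maps $E$ onto a convex vertical strip. I would then apply the criterion (in the spirit of Royster and Ziegler) that a locally univalent analytic $\Phi$ is CHD provided there is a real constant $\mu$ with
$$\mathrm{Re}\left\{e^{i\mu}\,(1+2z\cos\beta+z^2)\,\Phi'(z)\right\}\ge 0\qquad(z\in E).$$
The multiplier $1+2z\cos\beta+z^2=1/s'(z)$ is forced on us by the geometry of the strip, and choosing it this way is what makes the purely co-analytic contribution manageable.

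To execute the estimate I would write $\Phi'=(h\ast s)'-g_\beta'$ and multiply by $1/s'(z)$. Using the dilatation relation $g_\beta'=\omega_\beta\,h_\beta'$ together with $h_\beta'=s'/(1+\omega_\beta)$ gives $(1+2z\cos\beta+z^2)\,g_\beta'=\omega_\beta/(1+\omega_\beta)$, which is controlled by $|\omega_\beta|<1$. The genuinely hard term is $(1+2z\cos\beta+z^2)\,(h\ast s)'$, in which the Hadamard product with the strip map must be confronted directly; for this I would exploit the identity $z(h\ast s)'=h\ast(zs')$, the convexity of $f$ (in particular that $h-g$ is CHD) together with the representation $h'=1/((1+\omega)(1-z)^2)$ arising from its dilatation $\omega=g'/h'$ with $|\omega|<1$, and a boundary or minimum-principle argument to fix the sign of the real part. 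This convolution term is the main obstacle: one must transfer the one-directional convexity of the two factors through the Hadamard product and show the combined real part stays nonnegative throughout $E$. It is exactly here that the hypothesis $\beta\in[\pi/2,\pi)$, i.e. $\cos\beta\le 0$, becomes decisive, for it is the sign of $\cos\beta$ that lets the several contributions combine with the correct sign; for $\beta<\pi/2$ the estimate fails and, as is already visible in the statement, local univalence itself may be lost. Once the displayed inequality is secured, the criterion makes $\Phi$ univalent and CHD, and the shearing theorem returns the desired conclusion that $f\ast f_\beta\in S_H^0$ is convex in the direction of the real axis.
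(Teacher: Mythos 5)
First, a point of comparison: the paper itself contains no proof of Theorem A --- it is quoted verbatim from Dorff [\ref{do}] --- so your attempt must be measured against Dorff's original argument, whose strategy your outline correctly tracks: shear via Clunie--Sheil-Small to reduce to showing $\Phi=h\ast h_\beta-g\ast g_\beta$ is univalent and CHD, collapse one Hadamard product using the convolution identity $z/(1-z)$ to get $\Phi=h\ast s-g_\beta$ with $s=h_\beta+g_\beta$, and test $\mathrm{Re}\{(1+2z\cos\beta+z^2)\Phi'\}$ with the multiplier $1/s'$. Your bookkeeping (the $b_1=0$ point, $\Phi'=(h\ast h_\beta)'(1-\widetilde\omega)$, the identity $(1+2z\cos\beta+z^2)g_\beta'=\omega_\beta/(1+\omega_\beta)$) is all correct. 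However, the proposal stops exactly where the theorem lives. You acknowledge that $(1+2z\cos\beta+z^2)(h\ast s)'$ is ``the main obstacle'' and offer only a gesture --- ``a boundary or minimum-principle argument to fix the sign of the real part'' --- but no such soft argument can transfer positivity through a Hadamard product. The decisive tool, which your outline never names, is the Ruscheweyh--Sheil-Small convolution lemma: writing $zh'=k/(1+\omega)$ with $k(z)=z/(1-z)^2$ starlike and $\mathrm{Re}\,\tfrac{1}{1+\omega}>\tfrac12$, and using $zs'=s\ast k$, one gets $(1+2z\cos\beta+z^2)(h\ast s)'=\dfrac{s\ast\bigl(\tfrac{1}{1+\omega}k\bigr)}{s\ast k}$, whose values lie in the closed convex hull of $\bigl(\tfrac{1}{1+\omega}\bigr)(E)\subset\{\mathrm{Re}\,w\ge\tfrac12\}$ because $s$ is convex; since $\mathrm{Re}\,\tfrac{\omega_\beta}{1+\omega_\beta}<\tfrac12$, the total real part is positive. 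Without this lemma (or an equivalent), your estimate is simply missing.

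Second, your diagnosis of where $\beta\in[\pi/2,\pi)$ enters is misplaced, and this hides a further gap. The strip map $s$ is convex for every $\beta\in(0,\pi)$, so the real-part estimate above goes through for all such $\beta$; it is not ``the sign of $\cos\beta$'' that makes the contributions combine. The restriction is consumed instead in the step you state loosely as a criterion ``in the spirit of Royster and Ziegler'': the implication $\mathrm{Re}\{e^{i\mu}(1+2z\cos\beta+z^2)\Phi'\}\ge0\Rightarrow\Phi$ convex in the direction of the \emph{real} axis is not valid for an arbitrary quadratic multiplier with zeros on $|z|=1$ (this is precisely the Hengartner--Schober subtlety). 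The zeros here sit at $-e^{\pm i\beta}=e^{\pm i(\pi-\beta)}$, and the Royster--Ziegler form requires the half-angle $\nu=\pi-\beta$ to satisfy $\nu\le\pi/2$, i.e.\ $\cos\beta\le0$; for $\beta<\pi/2$ the criterion as you invoke it simply does not yield convexity in the direction of the real axis. So both pillars of your plan --- the positivity of the convolution term and the passage from the real-part inequality to CHD --- are asserted rather than proved, and the second is asserted in a form that is false without the zero-location constraint that is the actual role of the hypothesis $\beta\in[\pi/2,\pi)$.
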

Later, Dorff et al. [\ref{do and no}] proved that the condition of $f\ast f_\beta$ being locally univalent and sense-preserving can be dropped in some special cases. They proved:
\begin{thmb} (See Dorff et al. [\ref{do and no}])  Let $\displaystyle f_\beta$ be as in Theorem A with dialatation $\omega(z)=e^{i\theta}z^n,\,\theta\in \mathbb{R}$. If $F_0=H_0+\overline{G}_0$ where $\displaystyle H_0+G_0={z}/{(1-z)}$ with dilatation $\omega_0(z)=-z$, then $F_0\ast f_\beta\,\in S_H^0$ and is CHD for $n=1,2$.\end{thmb}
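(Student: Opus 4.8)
The plan is to read off everything except local univalence from Theorem~A. Since $F_0=H_0+\overline{G}_0$ lies in $K_H^0$ and satisfies $H_0+G_0=z/(1-z)$, Theorem~A guarantees that $F_0\ast f_\beta\in S_H^0$ and is CHD the moment we know that $F_0\ast f_\beta$ is locally univalent and sense-preserving. Thus the whole statement collapses to a single analytic assertion: the dilatation $\widetilde\omega=(G_0\ast g_\beta)'/(H_0\ast h_\beta)'$ of the convolution satisfies $|\widetilde\omega(z)|<1$ on $E$ (which in particular forces $(H_0\ast h_\beta)'\neq 0$). So I would devote the proof to producing $\widetilde\omega$ in closed form and bounding it.

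First I would pin down the four ingredients. Solving $H_0+G_0=z/(1-z)$ together with $G_0'=\omega_0 H_0'=-zH_0'$ gives $H_0'=1/(1-z)^3$ and $G_0'=-z/(1-z)^3$; differentiating (2) and using $g_\beta'=e^{i\theta}z^{n}h_\beta'$ gives $h_\beta'=\big[(1+e^{i\theta}z^{n})(1+2z\cos\beta+z^2)\big]^{-1}$. I would then push these through the convolution with the Hadamard identities $z(\phi\ast\psi)'=(z\phi')\ast\psi$, together with $\frac{z}{(1-z)^3}\ast\psi=z\psi'+\frac12 z^2\psi''$ and $\frac{z^2}{(1-z)^3}\ast\psi=\frac12 z^2\psi''$, obtaining $(H_0\ast h_\beta)'=h_\beta'+\frac{z}{2}h_\beta''$ and $(G_0\ast g_\beta)'=-\frac{z}{2}g_\beta''$. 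Substituting $g_\beta''=(e^{i\theta}z^{n})'h_\beta'+e^{i\theta}z^{n}h_\beta''$ and $z(e^{i\theta}z^{n})'=n\,e^{i\theta}z^{n}$ then makes the dilatation collapse to
\[
\widetilde\omega(z)=-\,e^{i\theta}z^{n}\,\frac{n\,h_\beta'+z\,h_\beta''}{2\,h_\beta'+z\,h_\beta''}=-\,e^{i\theta}z^{n}\,\frac{N(z)}{D(z)},
\]
where $N$ and $D$ are the polynomials that survive after clearing $h_\beta'$ and $h_\beta''$; for $\beta=\pi/2$ they are $N(z)=n+(n-2)z^2-2e^{i\theta}z^{n+2}$ and $D(z)=2+(2-n)e^{i\theta}z^{n}-n\,e^{i\theta}z^{n+2}$.

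The structural fact I would record next is the reflection identity $z^{n+2}\overline{N(1/\bar z)}=-e^{-i\theta}D(z)$, valid for every $\beta$, which yields $|N(z)|=|D(z)|$ on $|z|=1$ and hence $|\widetilde\omega(z)|=|z|^{n}=1$ there. Consequently, as soon as $D$ has no zero in $E$, the function $\widetilde\omega$ is analytic on $E$, is non-constant (it vanishes at the origin), and has boundary modulus $1$; the maximum modulus principle then delivers $|\widetilde\omega(z)|<1$ throughout $E$. Since $(H_0\ast h_\beta)'$ is a non-vanishing multiple of $D$ (the factors $1+e^{i\theta}z^{n}$ and $1+2z\cos\beta+z^2$ have no zero in $E$), this is exactly local univalence and sense-preservation, and Theorem~A finishes the argument.

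The main obstacle is therefore the lone claim that $D(z)\neq 0$ for $z\in E$. For $n=1,2$ this is immediate: when $\beta=\pi/2$ and $n=2$ one gets $N=D=2(1-e^{i\theta}z^4)$, so that $\widetilde\omega(z)=-e^{i\theta}z^2$ outright, while $n=1$ gives a cubic whose constant term $2$ dominates the remaining terms on $E$ by a direct triangle-inequality estimate; for $\beta\in[\pi/2,\pi)$ the same two cases succumb to the analogous estimate (or a short Cohn-rule zero count). I expect this non-vanishing to be precisely what fails for larger $n$: as $n$ grows the middle coefficient $2-n$ overwhelms the constant term near $|z|=1$, so $D$ can acquire zeros inside $E$. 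This is the analytic reason the conjecture is confined to $a\in[(n-2)/(n+2),1)$, and why the present choice $\omega_0(z)=-z$, i.e. $a=0$, is admissible only for $n=1,2$.
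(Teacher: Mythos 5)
Your reduction is sound, and it is worth noting at the outset that the paper itself contains no proof of Theorem B: it is quoted from Dorff et al.\ [3], and the closest in-paper argument is the main theorem, which at $a=0$ (admissible in $[(n-2)/(n+2),1)$ exactly when $n\le 2$) recovers Theorem B for $\beta=\pi/2$. Measured against that machinery, your skeleton is the same — reduce via Theorem A to $|\widetilde\omega|<1$, compute $\widetilde\omega$ as $z^n$ times a ratio of a polynomial and (up to a unimodular constant) its conjugate-reciprocal, then locate zeros — but you differ in two genuine ways. First, you derive the dilatation from scratch via the shear equations and the Hadamard identities $z(\phi\ast\psi)'=(z\phi')\ast\psi$, $\tfrac{z}{(1-z)^3}\ast\psi=z\psi'+\tfrac12 z^2\psi''$, rather than specializing Lemma 2.3 at $a=0$; your $N$, $D$, and the reflection identity $z^{n+2}\overline{N(1/\bar z)}=-e^{-i\theta}D(z)$ all check out (for every $\beta$, as you claim), and your formula agrees with the paper's (5) at $a=0$. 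Second, you replace the Schur--Cohn determinant apparatus with a maximum-modulus argument plus coefficient domination, which is far more elementary and entirely adequate for $n=1,2$; the paper's determinant machinery exists precisely because that elementary route does not scale to large $n$, as your closing diagnosis correctly explains.

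Two caveats, both patchable. (i) Your claim that for general $\beta\in[\pi/2,\pi)$ the two cases ``succumb to the analogous estimate'' overstates what the triangle inequality gives: for $n=1$ one finds $D(z)=2+(2\cos\beta+e^{i\theta})z-e^{i\theta}z^3$, and the middle coefficient has modulus up to nearly $3$ as $\beta\to\pi$ with adversarial $\theta$, so constant-term domination fails there; your parenthetical Cohn-rule count is then not an optional alternative but the actual proof (this is what Dorff et al.\ do). For $n=2$, by contrast, your own general-$\beta$ polynomials satisfy $N\equiv D$ identically, so $\widetilde\omega=-e^{i\theta}z^2$ for \emph{all} $\beta$, not just $\pi/2$ — a stronger and cleaner statement than the one you made. (ii) The maximum-modulus step needs one more sentence: $D$ can vanish on $|z|=1$ even when it is zero-free in $E$ (e.g.\ $n=1$, $\beta=\pi/2$, $\theta=\pi/2$ gives $D(i)=0$), so ``boundary modulus $1$'' is not literally available everywhere. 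The fix is your own reflection identity: it forces $N$ to vanish at any circle zero of $D$ with the same multiplicity, so the singularities of $N/D$ are removable and $\widetilde\omega$ extends continuously to $\overline{E}$ with unimodular boundary values; alternatively, write $\widetilde\omega=z^nN/N^*$ and factor over the zeros of $N$ as the paper does, which sidesteps the issue entirely.
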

Kumar et al. [\ref{ku and do}], defined harmonic right half-plane mappings $F_a=H_a+\overline{G}_a,$ where \begin{equation} H_a+G_a=\frac{z}{1-z}\,\,\text{with
\,dilatations} \,\,\omega_a(z)=\frac{a-z}{1-az},\,a\in(-1,1).\end{equation}
Note that for $a=0$ the mapping $F_a$ reduces to the mapping $F_0$ of Theorem B.\\
Recently, in [\ref{ku and gu}] Kumar et al. investigated the harmonic convolution of mapping $f_\beta$ with mapping $F_a$ and essentialy proved the following result, which generalizes Theorem B:
\begin{thmc} If $f_\beta= h_\beta+\overline{g}{_\beta}$ are the harmonic mappings given by (2) with dilatations $\omega(z)=e^{i\theta}z^n, \theta\in\mathbb{R}$, then $F_a\ast f_\beta \, \in S_H^0$ and are CHD for $n=1,2,3$ provided $a$ is restricted in the interval $\displaystyle  \left[{(n-2)}/{(n+2)},1\right).$\end{thmc}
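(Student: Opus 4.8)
The plan is to apply the shearing criterion of Clunie and Sheil-Small [\ref{cl and sh}]: since $F_a\ast f_\beta=(H_a\ast h_\beta)+\overline{(G_a\ast g_\beta)}$, this map will lie in $S_H^0$ and be CHD as soon as (i) the analytic function $\Phi:=H_a\ast h_\beta-G_a\ast g_\beta$ is univalent and convex in the direction of the real axis, and (ii) the convolution is locally univalent and sense-preserving, i.e. its dilatation $\widetilde\omega=(G_a\ast g_\beta)'/(H_a\ast h_\beta)'$ satisfies $|\widetilde\omega|<1$ on $E$. I expect the restriction $a\in[(n-2)/(n+2),1)$ to be needed only for (ii); step (i) should hold for every admissible $a$, which is consistent with the fact that Theorem A had to \emph{assume} local univalence.

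First I would make the convolutions explicit. From (3) one computes $H_a+G_a=z/(1-z)$ and $H_a-G_a=\tfrac{1-a}{1+a}\,z/(1-z)^{2}$. Writing $\phi_\beta=h_\beta+g_\beta$, $\psi_\beta=h_\beta-g_\beta$ and using the two Hadamard identities $p\ast\tfrac{z}{1-z}=p$ and $p\ast\tfrac{z}{(1-z)^{2}}=zp'$, all cross terms collapse and one gets $\Phi=\tfrac12\big[\tfrac{1-a}{1+a}z\phi_\beta'+\psi_\beta\big]$; differentiating the analogous expression for the sum and using $g_\beta'=e^{i\theta}z^{n}h_\beta'$ gives $(H_a\ast h_\beta)'=\tfrac12 h_\beta'[(1+c)+ct]$ and a matching expression for $(G_a\ast g_\beta)'$, in which the common factor $h_\beta'$ cancels. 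This reduces the dilatation to the clean form
\[
\widetilde\omega(z)=e^{i\theta}z^{n}\,\frac{(1-c-cn)-c\,t(z)}{(1+c)+c\,t(z)},\qquad c=\frac{1-a}{1+a},\quad t=\frac{zh_\beta''}{h_\beta'},
\]
where $t$ is an explicit rational function read off from $h_\beta'=1/[(1+2z\cos\beta+z^{2})(1+e^{i\theta}z^{n})]$. The hypothesis $a\ge(n-2)/(n+2)$ is precisely the condition $c\le 2/n$.

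For (i) I would verify the direction-convexity criterion with the ``strip'' factor $1+2z\cos\beta+z^{2}$. A short computation gives
\[
(1+2z\cos\beta+z^{2})\,\Phi'(z)=\frac12\Big[\,c\,\frac{1-z^{2}}{1+2z\cos\beta+z^{2}}+\frac{1-e^{i\theta}z^{n}}{1+e^{i\theta}z^{n}}\,\Big],
\]
and since both $\tfrac{1-z^{2}}{1+2z\cos\beta+z^{2}}$ (a quick boundary computation shows it maps $\partial E$ to the imaginary axis and $0\mapsto1$) and $\tfrac{1-e^{i\theta}z^{n}}{1+e^{i\theta}z^{n}}$ map $E$ into the right half-plane, the real part is positive for every $c>0$. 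By the Royster--Ziegler criterion this makes $\Phi$ univalent and CHD, with no restriction on $a$.

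The heart of the matter is (ii), and this is where I expect the work to lie. Since $(H_a\ast h_\beta)'$ is proportional to the nonvanishing factor $h_\beta'$ times $(1+c)+ct$, proving $|\widetilde\omega|<1$ splits into (a) showing the denominator $(1+c)+ct$ never vanishes on $E$, and (b) a boundary estimate. For (a), clearing denominators turns $(1+c)+ct=0$ into
\[
1+c\,\frac{1-z^{2}}{1+2z\cos\beta+z^{2}}=cn\,\frac{e^{i\theta}z^{n}}{1+e^{i\theta}z^{n}} .
\]
The left side has real part $>1$, whereas the right side has real part $<cn/2\le 1$ exactly when $c\le2/n$, so there is no solution in $E$; this is the step that converts the earlier case-by-case treatment of $n=1,2,3$ into one uniform argument. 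Granting (a), a direct calculation shows that on $|z|=1$ the numerator and denominator of $\widetilde\omega$ become complex conjugates, so $|\widetilde\omega|=1$ there, and the apparent boundary poles cancel in the ratio, so $\widetilde\omega$ extends continuously to $\overline E$; the maximum modulus principle then forces $|\widetilde\omega|<1$ throughout $E$. The main obstacle is thus the real-part inequality in (a): the bound $\operatorname{Re}\big(e^{i\theta}z^{n}/(1+e^{i\theta}z^{n})\big)<\tfrac12$ together with $c\le2/n$ is what drives everything, and its endpoint $a=(n-2)/(n+2)$ (where $cn/2=1$) is the delicate case, since there the numerator and denominator can vanish simultaneously on $\partial E$; I would handle that case by a limiting argument from $a>(n-2)/(n+2)$.
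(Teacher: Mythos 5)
Your proposal is correct, and it takes a genuinely different route from the one behind this statement. Theorem C is quoted from [\ref{ku and gu}], where (as in the present paper's main theorem) the argument runs: reduce to local univalence via Theorem A, write $\widetilde\omega(z)=z^ne^{2i\theta}p(z)/p^*(z)$ for the explicit degree-$(n+2)$ polynomial $p$, and certify that all zeros of $p$ lie in $\overline E$ by Cohn's rule and the Schur--Cohn determinant criterion --- a heavy case analysis (each $n\le 3$ separately in [\ref{ku and gu}]; six cases, four tables, and special treatments of $a=n/(n+2)$ and of exceptional $\theta$ here). You bypass the polynomial entirely, and your computations check out: $H_a-G_a=\frac{1-a}{1+a}\frac{z}{(1-z)^2}$, hence $(H_a\ast h_\beta)'=\frac{1}{2}h_\beta'[(1+c)+ct]$ and $(G_a\ast g_\beta)'=\frac{1}{2}e^{i\theta}z^nh_\beta'[(1-c-cn)-ct]$ (at $z=0$ this reproduces the coefficient $\frac{1}{2}(2a+an-n)e^{i\theta}$ visible in (5)); the separation argument is valid because $\frac{1-z^2}{1+2z\cos\beta+z^2}=-1+\frac{1}{1+ze^{i\beta}}+\frac{1}{1+ze^{-i\beta}}$ has positive real part for every $\beta\in(0,\pi)$ --- note this decomposition is the honest justification, since your ``purely imaginary boundary values'' remark by itself is insufficient in the presence of the boundary poles at $-e^{\pm i\beta}$ --- while $\operatorname{Re}\frac{u}{1+u}<\frac{1}{2}$ for $|u|<1$, so $cn\le 2$, i.e.\ $a\ge (n-2)/(n+2)$, forbids zeros of the denominator in $E$; and since $1+2z\cos\beta+z^2$ and $1+e^{i\theta}z^n$ are zero-free in $E$, this is exactly the statement that $p^*$ is zero-free in $E$, i.e.\ that all zeros of $p$ lie in $\overline E$ --- the conclusion the paper buys with Schur--Cohn. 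Your boundary claim is literally true: on $|z|=1$ one computes $\operatorname{Re}\,t=-1-\frac{n}{2}$, so $(1-c-cn)-ct=\overline{(1+c)+ct}$ there, and then maximum modulus (or the paper's own Blaschke-factor remark) finishes. What your route buys: it is uniform in $n$ and in $\beta$, so written out it would settle the full conjecture rather than only $\beta=\pi/2$, modulo the standard Royster--Ziegler-type lemma used throughout this literature; it makes transparent where the threshold $(n-2)/(n+2)$ comes from; and it needs none of the exceptional cases (at $a=n/(n+2)$, where $a_0=0$ degenerates the Cohn/Schur--Cohn hypotheses, your estimate does not even notice, and at $a=(n-2)/(n+2)$ your interior inequalities remain strict --- indeed $\widetilde\omega=-e^{i\theta}z^n$ there, as the paper observes --- so the limiting argument you anticipate is unnecessary). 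A final point in your favor: proving CHD of $\Phi$ directly is cleaner than citing Theorem A, since Theorem A is stated for $f\in K_H^0$ whereas $F_a$ has $G_a'(0)=a/(1+a)\neq 0$ for $a\neq 0$.
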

At the end of the above paper authors remarked that Theorem C continues to hold true for $n=4$ and conjectured the following general result.

{\bf Conjecture.} Let $f_\beta= h_\beta+\overline{g}{_\beta}$ be as in Theorem C. Then $F_a\ast f_\beta \, \in S_H^0$ and is CHD for all $n\in \mathbb{N}$ provided $\displaystyle a \in \left[{(n-2)}/{(n+2)},1\right)$.\\
\indent In the present paper, we settle this conjecture in the affirmative for $\beta=\pi/2$.

\section{Main Results}

The following results will be used in proving our main theorem.

\begin{lemma} (Cohn's Rule [\ref{ra and sc}, p.375])  Given a polynomial $$t(z)= a_0 + a_1z + a_2z^2+...+a_nz^n$$ of
degree $n$, let $$ t^*(z)=\displaystyle z^n\overline{t\left(\frac{1}{\overline z}\right)} = \overline {a}_n + \overline {a}_{n-1}z + \overline {a}_{n-2}z^2 +...+ \overline{a}_0z^n.$$
Denote by $r$ and $s$ the number of zeros of $t$ inside and on the unit circle $|z|=1$, respectively.
 If $|a_0|<|a_n|,$ then $$ t_1(z)= \frac{\overline {a}_n t(z)-a_0t^*(z)}{z}$$ is of degree $n-1$ and has $r_1=r-1$ and $s_1=s$ number
 of zeros inside and on the unit circle $|z|=1$, respectively.\end{lemma}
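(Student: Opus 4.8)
The plan is to establish this classical rule by applying Rouché's theorem to the numerator $N(z):=\overline{a}_n t(z)-a_0 t^*(z)$, after recording two elementary facts about the reciprocal polynomial $t^*(z)=z^n\overline{t(1/\overline z)}$. The first is that $|t^*(z)|=|t(z)|$ whenever $|z|=1$, which is immediate from $1/\overline z=z$ on the unit circle. The second is that $t_1=N/z$ is a genuine polynomial of degree $n-1$: the constant term of $N$ is $\overline{a}_n a_0-a_0\overline{a}_n=0$, so $z\mid N$, while the coefficient of $z^n$ in $N$ equals $|a_n|^2-|a_0|^2>0$ by the hypothesis $|a_0|<|a_n|$, so $\deg N=n$ and hence $\deg t_1=n-1$.

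When $t$ has no zeros on $|z|=1$ (the case $s=0$) the count is direct. On the circle one has $|\overline{a}_n t(z)|=|a_n|\,|t(z)|>|a_0|\,|t(z)|=|a_0 t^*(z)|$, so Rouché's theorem shows that $N$ and $\overline{a}_n t$ have the same number $r$ of zeros in $E$. Exactly one of these is the zero $N$ acquires at the origin, so $t_1=N/z$ has $r_1=r-1$ zeros in $E$ and $s_1=0=s$ on the circle.

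The main obstacle is the case $s>0$, where $t$ vanishes on $|z|=1$ and Rouché cannot be applied to $N$ directly, since both summands vanish at every boundary zero. I would circumvent this by factoring $t=pq$, with $p$ the monic product of the unit-circle zeros of $t$ (so $\deg p=s$) and $q$ carrying the remaining zeros, whence $q$ has no zeros on $|z|=1$. The key algebraic points are that reciprocation is multiplicative, $t^*=p^*q^*$, and that each unit-circle factor obeys $1-\overline\eta z=-\overline\eta(z-\eta)$ for $|\eta|=1$, giving $p^*=\overline d\,p$ with $d=p(0)$ unimodular. Substituting these and using $|d|=1$ collapses the numerator to $N(z)=p(z)\bigl[\overline{b}_m q(z)-b_0 q^*(z)\bigr]$, where $b_m,b_0$ are the leading and constant coefficients of $q$; the bracketed factor is precisely the Cohn numerator $M$ of $q$.

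It then remains to apply the $s=0$ analysis to $q$, which is legitimate because $q$ has no boundary zeros and $|b_m|>|b_0|$ (equivalent to $|a_0|<|a_n|$ since $|d|=1$). The number of interior zeros of $q$ is exactly $r$, namely the interior zeros of $t$, so Rouché gives $M$ precisely $r$ zeros in $E$, one of them at the origin; thus $M/z$ contributes $r-1$ interior and no boundary zeros, while $p$ contributes its $s$ zeros, all on the circle and none inside. Reassembling $t_1=p\cdot(M/z)$ yields $r_1=r-1$ and $s_1=s$, and the degree count $\deg t_1=s+(\deg q-1)=n-1$ is then automatic, completing the proof.
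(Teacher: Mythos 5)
Your proof is correct, and it is worth noting that the paper itself offers no proof of this lemma at all: it is quoted verbatim from Rahman and Schmeisser [\ref{ra and sc}, p.\,375], so there is no in-paper argument to compare against. Your Rouch\'e argument is the standard one and you have handled the one genuinely delicate point correctly: when $s>0$ one cannot apply Rouch\'e to $N=\overline{a}_n t-a_0t^*$ directly, since both $|\overline{a}_nt|$ and $|a_0t^*|=|a_0||t|$ vanish at each boundary zero, and your factorization $t=pq$ with $p^*=\overline{d}\,p$, $|d|=1$, which collapses $N$ to $p\cdot\bigl[\overline{b}_mq-b_0q^*\bigr]$, is exactly the right device. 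The auxiliary claims all check out: the constant term of $N$ cancels so $z\mid N$; the leading coefficient $|a_n|^2-|a_0|^2>0$ gives $\deg t_1=n-1$; $|t^*|=|t|$ on $|z|=1$ rules out boundary zeros of the Cohn numerator of $q$ (so $s_1$ comes entirely from $p$, with the right multiplicities); and $|b_0|=|a_0|$, $|b_m|=|a_n|$ transfers the hypothesis to $q$. Two cosmetic remarks, neither a gap: your phrase ``exactly one of these is the zero at the origin'' should be read with multiplicity (if $N$ vanishes to higher order at $0$, dividing by $z$ removes one factor and $t_1$ may still vanish there, but the count $r_1=r-1$ is unaffected); and the hypothesis $|a_0|<|a_n|$ silently guarantees $r\geq 1$ (since $|a_0|=|a_n|\prod_j|z_j|$ over the zeros $z_j$ of $t$), which is what makes $r_1=r-1$ a nonnegative count --- your argument in fact re-derives this, since $N(0)=0$ places one of the $r$ Rouch\'e zeros at the origin.
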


\begin{lemma} (Schur-Cohn algorithm [\ref{ra and sc}, p.383])  Given a polynomial
 $$ r(z) =a_0+a_1z+\cdots+a_{n}z^{n}$$ of degree $n$, let \[M_{k}= det\begin{pmatrix}
\overline{B}_{k}\,^T& A_{k} \\
\overline{A}_{k}\,^T& B_{k}
\end{pmatrix} \begin{pmatrix}
k=1,2\cdots,n
\end{pmatrix},\] \\ where $A_{k}$ and $B_{k}$ are the triangular matrices

\[A_{k}=\begin{pmatrix}
a_{0}&a_{1}&\cdots &a_{k-1} \\
&a_{0}&\cdots & a_{k-2}\\
 &  &  \ddots & \vdots\\
& &  &a_{0}
\end{pmatrix},\qquad\,\,\,
 B_{k}=\begin{pmatrix}
\overline{a}_{n}&\overline{a}_{n-1}&\cdots &\overline{a}_{n-k+1} \\
&\overline{a}_{n}&\cdots &\overline{a}_{n-k+2} \\

 & & \ddots & \vdots\\
& &  &\overline{a}_{n}
\end{pmatrix}.\]\\
\normalsize{
Then the polynomial $r$ has all its zeros inside the unit circle $|z|=1$ if and only if the determinants $M_1, M_2\cdots,M_{n}$ are all positive.}\end{lemma}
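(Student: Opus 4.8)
The plan is to deduce the criterion from Cohn's Rule by peeling the zeros of $r$ off one at a time and then matching the resulting scalar positivity conditions with the determinants $M_k$. First I would set up the Schur--Cohn tower: put $t^{(0)}=r$ and, as long as the leading coefficient dominates the constant term, define $t^{(k+1)}$ to be the degree-lowering polynomial furnished by Cohn's Rule applied to $t^{(k)}$,
\begin{equation*}
t^{(k+1)}(z)=\frac{\overline{a}^{(k)}_{m_k}\,t^{(k)}(z)-a^{(k)}_{0}\,\bigl(t^{(k)}\bigr)^{*}(z)}{z},\qquad m_k=\deg t^{(k)}=n-k,
\end{equation*}
whose leading coefficient is $\delta_k:=|a^{(k)}_{m_k}|^{2}-|a^{(k)}_{0}|^{2}$. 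Using Cohn's Rule I would then show that $r$ has all $n$ of its zeros in $|z|<1$ if and only if $\delta_k>0$ for every $k=0,1,\dots,n-1$. Indeed, if all zeros of $r$ lie in the disc then $|a_0|=|a_n|\prod_j|z_j|<|a_n|$, so $\delta_0>0$ and Cohn's Rule produces $t^{(1)}$ with $r_1=n-1$ interior and $s_1=0$ boundary zeros; descending the tower keeps every reduced polynomial disc-stable, forcing each $\delta_k>0$. Conversely, if every $\delta_k>0$ then Cohn's Rule applies at each stage, and since the terminal polynomial $t^{(n)}$ is a nonzero constant with no zeros, the relations $r_k=r_{k+1}+1$ and $s_k=s_{k+1}$ read back up the tower force $r$ to have exactly $n$ interior and no boundary zeros.

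The crux is to tie the $\delta_k$ to the determinants $M_k$. I would first verify the base case by direct computation: for $k=1$ the block determinant collapses to
\begin{equation*}
M_1=\det\begin{pmatrix} a_n & a_0\\[2pt] \overline{a_0} & \overline{a}_n\end{pmatrix}=|a_n|^{2}-|a_0|^{2}=\delta_0 .
\end{equation*}
For general $k$ I would prove, by induction on $k$, a telescoping identity expressing $M_k$ as a product of reduction quantities, $M_k=\prod_{j=0}^{k-1}\widehat\delta_j$ with $M_0:=1$, where $\widehat\delta_j$ is the leading coefficient produced at the $j$-th step of a suitably normalized Cohn tower; equivalently, the symmetric ($LDL^{*}$-type) elimination of the Hermitian form underlying the nested block determinants has the $\widehat\delta_j$ as its successive pivots, so that $M_k/M_{k-1}=\widehat\delta_{k-1}$. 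Rescaling each $t^{(k)}$ by a nonzero constant multiplies $\delta_k$ only by a positive factor, whence $\operatorname{sign}\widehat\delta_j=\operatorname{sign}\delta_j$; therefore $M_1,\dots,M_k$ are all positive precisely when $\delta_0,\dots,\delta_{k-1}$ are all positive.

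Combining the two equivalences, ``$r$ is disc-stable'' $\Longleftrightarrow$ ``$\delta_k>0$ for all $k$'' $\Longleftrightarrow$ ``$M_k>0$ for all $k$'', which is the assertion of the lemma; structurally this is Sylvester's criterion, the positivity of all the $M_k$ being equivalent to positive-definiteness of the Hermitian Schur--Cohn form whose triangular factorization is realized by the Cohn tower and whose pivots are the $\widehat\delta_j$. The main obstacle is the determinant bookkeeping of the middle step: establishing the ratio identity $M_k=\widehat\delta_{k-1}M_{k-1}$ by comparing the $2k\times2k$ block Toeplitz determinant at level $k$ with the one at level $k-1$ under a single Cohn reduction, keeping exact track of how the triangular Toeplitz blocks $A_k,B_k$ transform and of the normalization that makes the telescoping exact. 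A secondary point to handle carefully is the degenerate case in which some $\delta_k\le 0$, where Cohn's Rule no longer applies directly; there one argues that the reduction breaks down exactly when a zero sits on or outside $|z|=1$, matching the vanishing or sign change of the corresponding $M_k$.
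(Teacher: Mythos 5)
The paper offers no proof of this lemma to compare against: it is quoted from Rahman and Schmeisser [p.~383] as a known result, so your proposal must be measured against the classical argument in that reference --- and in substance it reconstructs exactly that argument. The zero-counting half is complete and correct: the Cohn reduction $t^{(k+1)}(z)=\bigl(\overline{a}^{(k)}_{m_k}t^{(k)}(z)-a^{(k)}_{0}(t^{(k)})^{*}(z)\bigr)/z$ does have leading coefficient $\delta_k=|a^{(k)}_{m_k}|^{2}-|a^{(k)}_{0}|^{2}$; strict disc-stability propagates down the tower via $|a_0|=|a_n|\prod_j|z_j|<|a_n|$ (valid even when some zero sits at the origin); and reading the relations $r_k=r_{k+1}+1$, $s_k=s_{k+1}$ back up from the nonzero terminal constant yields $n$ interior and no boundary zeros. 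Your base case $M_1=|a_n|^{2}-|a_0|^{2}=\delta_0$ is also right, and the observation that rescaling a level of the tower multiplies its pivot by $|c|^{2}>0$, hence cannot disturb signs, correctly disposes of the normalization issue. The one substantive debt is the middle step: you assert, but do not establish, that the nested $2k\times 2k$ block determinants $M_k$ are (after a permutation) the leading principal minors of a single Hermitian Schur--Cohn form whose $LDL^{*}$ pivots are realized by the Cohn tower, giving $M_k=\widehat\delta_{k-1}M_{k-1}$. That identity is true and is precisely the determinant lemma proved in Rahman--Schmeisser, but as written it is the crux of the whole criterion and your proposal would need it supplied in full, with the bookkeeping of how the triangular Toeplitz blocks $A_k,B_k$ transform under one reduction. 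Finally, note that your closing worry about the degenerate case $\delta_k\le 0$ is avoidable: in your own induction order, $M_1=\delta_0>0$ licenses the first reduction, and at each later stage the already-established ratio $M_k/M_{k-1}=\widehat\delta_{k-1}>0$ certifies admissibility of the next Cohn step before the identity is ever invoked beyond the levels constructed, so the equivalence never requires extending the telescoping identity across a breakdown of the tower.
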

\begin{lemma}(See Kumar et al. [\ref{ku and gu}]) If $f_\beta= h_\beta+\overline{g}_{\beta}$ is the mapping given by (2) with dilatation $\displaystyle\omega={g'_\beta}/{h'_\beta}$, then $\widetilde{\omega}$, the dilatation of $F_a\ast f_\beta,$ is given by \begin{equation} \displaystyle \widetilde{\omega}(z)=\left[\frac{2\omega(1+\omega)(a+az\cos\beta+z\cos\beta+z^2)-z\omega'(1-a)(1+2z\cos\beta+z^2)}{2(1+z\cos\beta+az\cos\beta+az^2)(1+\omega)-z\omega'(1-a)(1+2z\cos\beta+z^2)}\right].
\end{equation}
\end{lemma}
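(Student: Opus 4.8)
The plan is to derive (4) directly from the definition $\widetilde\omega=(G_a\ast g_\beta)'/(H_a\ast h_\beta)'$ by writing both convolutions in closed form through the two standard Hadamard identities $\frac{z}{1-z}\ast\Phi=\Phi$ and $\frac{z}{(1-z)^2}\ast\Phi=z\Phi'$, valid for every analytic $\Phi$ with $\Phi(0)=0$, and then inserting the shearing expressions for $h_\beta$ and $g_\beta$.

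First I would assemble the building blocks. Shearing (2) with $\omega=g_\beta'/h_\beta'$ gives $h_\beta'(1+\omega)=\phi'$, where $\phi=h_\beta+g_\beta$; differentiating the logarithm in (2) collapses to $\phi'(z)=1/(1+2z\cos\beta+z^2)$. Abbreviating $P=1+2z\cos\beta+z^2$ (so that $P'=2(\cos\beta+z)$), this yields $h_\beta'=1/[(1+\omega)P]$ and $g_\beta'=\omega/[(1+\omega)P]$. On the half-plane side, $H_a+G_a=z/(1-z)$ together with $\omega_a=G_a'/H_a'=(a-z)/(1-az)$ and $1+\omega_a=(1+a)(1-z)/(1-az)$ give $H_a'=(1-az)/[(1+a)(1-z)^3]$ and $G_a'=(a-z)/[(1+a)(1-z)^3]$.

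Next I would integrate these. The splittings $1-az=(1-a)+a(1-z)$ and $a-z=-(1-a)+(1-z)$, combined with the elementary identity $\frac{z(2-z)}{(1-z)^2}=\frac{z}{(1-z)^2}+\frac{z}{1-z}$, collapse $H_a$ and $G_a$ to
$$H_a=\frac12\,\frac{z}{1-z}+\frac{1-a}{2(1+a)}\,\frac{z}{(1-z)^2},\qquad G_a=\frac12\,\frac{z}{1-z}-\frac{1-a}{2(1+a)}\,\frac{z}{(1-z)^2}.$$
Applying the two Hadamard identities termwise gives $H_a\ast h_\beta=\frac12 h_\beta+\frac{1-a}{2(1+a)}zh_\beta'$ and $G_a\ast g_\beta=\frac12 g_\beta-\frac{1-a}{2(1+a)}zg_\beta'$; differentiating, scaling by $2(1+a)$, and cancelling the overlapping $h_\beta'$ and $g_\beta'$ pieces reduces the dilatation to the compact form
$$\widetilde\omega=\frac{2a\,g_\beta'-(1-a)z\,g_\beta''}{2\,h_\beta'+(1-a)z\,h_\beta''}.$$

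Finally I would substitute $h_\beta''=-[\omega'P+(1+\omega)P']/[(1+\omega)^2P^2]$ and $g_\beta''=[\omega'P-\omega(1+\omega)P']/[(1+\omega)^2P^2]$, clear the common factor $(1+\omega)^2P^2$ from numerator and denominator, and regroup. The numerator organizes into $\omega(1+\omega)[2aP+(1-a)zP']-(1-a)z\omega'P$ and the denominator into $(1+\omega)[2P-(1-a)zP']-(1-a)z\omega'P$. The decisive step is the pair of polynomial identities $2aP+(1-a)zP'=2(a+az\cos\beta+z\cos\beta+z^2)$ and $2P-(1-a)zP'=2(1+z\cos\beta+az\cos\beta+az^2)$, which reproduce verbatim the bracketed factors of (4). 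I expect the only genuine obstacle to be the bookkeeping: computing $h_\beta''$ and $g_\beta''$ without sign slips and grouping the three families of terms (those carrying $\omega(1+\omega)$, those carrying $(1+\omega)$, and those carrying $\omega'$) so that $(1+\omega)^2P^2$ cancels cleanly; once the two displayed $P$-identities are recognised, the result matches (4) exactly.
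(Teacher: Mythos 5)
Your derivation is correct: every intermediate step checks out --- $\phi'=1/P$ with $P=1+2z\cos\beta+z^2$, the splittings $H_a=\frac12\frac{z}{1-z}+\frac{1-a}{2(1+a)}\frac{z}{(1-z)^2}$ and $G_a=\frac12\frac{z}{1-z}-\frac{1-a}{2(1+a)}\frac{z}{(1-z)^2}$ (both differentiate back to $(1-az)/[(1+a)(1-z)^3]$ and $(a-z)/[(1+a)(1-z)^3]$), the compact form $\widetilde\omega=\bigl[2a\,g_\beta'-(1-a)z\,g_\beta''\bigr]/\bigl[2h_\beta'+(1-a)z\,h_\beta''\bigr]$, and the two identities $2aP+(1-a)zP'=2(a+az\cos\beta+z\cos\beta+z^2)$ and $2P-(1-a)zP'=2(1+z\cos\beta+az\cos\beta+az^2)$, which yield (4) verbatim after cancelling $(1+\omega)^2P^2$ (legitimate since $P$ vanishes only on $|z|=1$ and $|\omega|<1$ in $E$). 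The paper itself states this lemma without proof, citing Kumar et al.\ [\ref{ku and gu}], and your route via the Hadamard identities $\frac{z}{1-z}\ast\Phi=\Phi$ and $\frac{z}{(1-z)^2}\ast\Phi=z\Phi'$ applied to the sheared mappings is precisely the standard derivation used there, so this is essentially the same approach.
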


We now proceed to state and prove our main result.

\begin{theorem} Let $F_a=H_a+\overline{G}_{a}$ be given by (3) and  $\displaystyle f_{{\pi}/{2}}= h_{{\pi}/{2}}+\overline{g}_{{{\pi}/{2}}}$ be the map, obtained from (2) with $\displaystyle \beta = {\pi}/{2}$ and dilatation $\displaystyle \omega(z)={g'_{{\pi}/{2}}(z)}/{h'_{{\pi}/{2}}(z)}=e^{i\theta}z^n\,\,(\theta \in \mathbb{R}\,, n\in\mathbb{N})$. Then $ \displaystyle F_a\ast f_{{\pi}/{2}} \, \in S_H^0$ and is CHD for $\displaystyle a \in \left[{(n-2)}/{(n+2)},1\right)$.\end{theorem}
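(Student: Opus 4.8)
The plan is to reduce the whole statement to Theorem A. The map $F_a$, whose analytic and co-analytic parts satisfy $H_a+G_a=z/(1-z)$, is the convex right half-plane mapping to which Theorem A applies; consequently, exactly as in the proof of Theorem C, it suffices to prove that $F_a\ast f_{\pi/2}$ is locally univalent and sense-preserving, for then Theorem A automatically yields membership in $S_H^0$ together with the CHD property. By the Clunie--Sheil-Small criterion, local univalence and sense-preservation are equivalent to $|\widetilde\omega(z)|<1$ on $E$, where $\widetilde\omega$ is the dilatation of the convolution. Thus the theorem collapses to a single modulus estimate. First I would set $\beta=\pi/2$ (so $\cos\beta=0$) and $\omega=e^{i\theta}z^{n}$ (so that $z\omega'=n\omega$) in formula (4) of Lemma 2.3; cancelling the common factor $\omega$ from the numerator gives
\[ \widetilde\omega(z)=e^{i\theta}z^{n}\,\frac{N(z)}{D(z)},\qquad N(z)=c+dz^{2}+2ae^{i\theta}z^{n}+2e^{i\theta}z^{n+2}, \]
where $c=(n+2)a-n$, $d=na+(2-n)$, and $D(z)=2+2az^{2}+de^{i\theta}z^{n}+ce^{i\theta}z^{n+2}$.

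The conceptual core is the observation that $N$ and $D$ are conjugate--reciprocal: a direct check gives $D(z)=e^{i\theta}z^{n+2}\overline{N(1/\overline z)}=e^{i\theta}N^{*}(z)$, so that
\[ \widetilde\omega(z)=\frac{z^{n}N(z)}{N^{*}(z)},\qquad N^{*}(z)=z^{n+2}\overline{N(1/\overline z)}. \]
On $|z|=1$ we have $|N^{*}|=|N|$, hence $|\widetilde\omega|=1$ there, while $\widetilde\omega(0)=0$. Therefore, provided $N^{*}$ has no zero in the open disk (equivalently, all zeros of $N$ lie in $\overline E$), $\widetilde\omega$ is analytic on $E$ and $N/N^{*}$ behaves like a finite Blaschke product up to unimodular boundary factors; the maximum principle then forces $|\widetilde\omega(z)|<1$ on $E$. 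Everything thus reduces to the zero-localisation statement: \emph{$N$ has all its zeros in $\overline E$} for every $n$ and every $a\in[(n-2)/(n+2),1)$.

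This last point is where the real work, and the main obstacle, lies, since it must be made uniform in $n$. Here I would invoke Cohn's Rule (Lemma 2.1). For $a$ in the open interval one has $|a_{0}|=|c|<2=|a_{n+2}|$, so Cohn's Rule applies, and the pleasant surprise is that a single application collapses $N$ to $t_{1}(z)=z\,(4-c^{2})\,P(z)$, where, using the factorisations $4-c^{2}=(n+2)^{2}(1-a)\bigl(a-\tfrac{n-2}{n+2}\bigr)$ together with the analogous expressions for $4a-cd$ and $d-ac$, the parameter $a$ cancels completely and
\[ P(z)=z^{n}+\frac{n}{n+2}\,z^{n-2}+\frac{2}{n+2}\,e^{-i\theta}. \]
Because Cohn's Rule leaves the number of zeros lying outside $\overline E$ unchanged, and the extra factor $z$ only contributes a zero at the origin, it suffices to show that the universal trinomial $P$ has all $n$ of its zeros in $\overline E$. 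For $n\ge 3$ this follows from a one-line Rouch\'e comparison of $z^{n}$ against the remaining two terms on each circle $|z|=R>1$, since $R^{\,n-2}\bigl(R^{2}-\tfrac{n}{n+2}\bigr)>\tfrac{2}{n+2}$ holds for all $R>1$, forcing all zeros into $|z|\le R$ for every $R>1$; alternatively one may feed $P$ into the Schur--Cohn algorithm (Lemma 2.2) and verify that the determinants $M_{k}$ stay nonnegative. The low cases $n=1,2$, where the trinomial degenerates, I would dispatch directly, consistently with Theorem C.

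Finally, the left endpoint $a=(n-2)/(n+2)$ makes $c=-2$, so that $|a_{0}|=|a_{n+2}|$ and Cohn's Rule degenerates; I would close this case by continuity, since the zeros of $N$ depend continuously on $a$ and ``all zeros in $\overline E$'' is a closed condition. This also explains the shape of the admissible interval: its lower bound is precisely the threshold $|c|\le 2$ below which Cohn's Rule would instead detect a zero escaping $\overline E$, that is, a loss of local univalence. Assembling the pieces gives $|\widetilde\omega|<1$ on $E$, so $F_a\ast f_{\pi/2}$ is locally univalent and sense-preserving, and Theorem A then delivers the conclusion that it lies in $S_H^0$ and is CHD.
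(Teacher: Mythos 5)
Your proposal is correct, and I verified the two computations it stands on: with $c=(n+2)a-n$, $d=na+2-n$ and $K=(n+2)(1-a)\bigl(a-\tfrac{n-2}{n+2}\bigr)$ one indeed has $4-c^2=(n+2)K$, $4a-cd=nK$ and $d-ac=K$, so the Cohn transform of your $N$ (which is $2e^{i\theta}$ times the paper's polynomial $p$ in (6)) is exactly $z\,(n+2)K\,P(z)$ with $P(z)=z^n+\tfrac{n}{n+2}z^{n-2}+\tfrac{2}{n+2}e^{-i\theta}$; and your Rouch\'e inequality $R^{n-2}\bigl(R^2-\tfrac{n}{n+2}\bigr)>\tfrac{2}{n+2}$ does hold for all $R>1$, since equality holds at $R=1$ and the left side is strictly increasing. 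You and the paper agree up through the reduction via Theorem A, formula (5), and the Blaschke-product argument that localises the problem to the zeros of $p$; after that your route is genuinely different and far shorter. The paper never makes your one-step Cohn reduction: it runs the full Schur--Cohn algorithm (Lemma 2.2) on $p$, proving positivity of the determinants $M_1,\dots,M_{n+2}$ through six cases split by the parity of $n$ and the range of $k$, supported by four tables of determinant values; and because some $M_k$ vanish at degenerate angles, it must treat $\theta=2m\pi$, $(2m+1)\pi$ and $(2m+1)\pi/2$ separately by repeated Cohn steps, while also excising $a=n/(n+2)$ (where $p(0)=0$) for a separate argument whose details it skips ``for want of space''. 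Your reduction absorbs all of this at once: the cancellation of $a$ makes the target trinomial universal, Rouch\'e is uniform in $\theta$ so no exceptional angles arise, and at $a=n/(n+2)$ Cohn's rule still applies since there $a_0=c=0$; notably your $P$ is precisely $\beta(z)/(n+2)$ from the paper's skipped case, so your argument even supplies the verification the paper omits. Two small points to make explicit in a final write-up: since $t_1=z(4-c^2)P$ has degree $n+1$ and Cohn's rule preserves the number of zeros outside the closed disk, ``all zeros of $P$ in $\overline{E}$'' does pull back correctly to ``all zeros of $N$ in $\overline{E}$''; and at the endpoint $a=(n-2)/(n+2)$ the paper's direct observation $\widetilde{\omega}(z)=-z^ne^{i\theta}$ is even quicker than your (valid) continuity argument. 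What each approach buys: the paper's determinant route is elementary and fully explicit at the cost of heavy case analysis, whereas yours is uniform in $n$, $a$ and $\theta$, its only concession --- $n=1,2$ via Theorem C --- mirroring the paper's own appeal to earlier results for $n\le 4$.
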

\begin{proof} In view of Theorem A, we need only to show that $F_a\ast f_{{\pi}/{2}}$ is locally univalent and sense-preserving or equivalently the dilatation $\widetilde{\omega}\,$ of $F_a\ast f_{{\pi}/{2}}$ satisfies $\displaystyle|\widetilde{\omega}(z)|<1$ for all $z \in E$. Setting $\omega(z)=e^{i\theta}z^n$ and $\displaystyle \beta ={\pi}/{2}$ in (4), we get
\begin{equation}
\indent\hspace{-.7cm}\displaystyle\widetilde{\omega}(z)=\displaystyle z^ne^{2i\theta}\left[\frac{z^{n+2}+az^n+ \frac{1}{2}(2+an-n)e^{-i\theta}z^2+\frac{1}{2}(2a+an-n)e^{-i\theta}}{\frac{1}{2}(2a+an-n)e^{i\theta}z^{n+2}+\frac{1}{2}(2+an-n)e^{i\theta}z^n+az^2+1}\right]
\end{equation}

$\indent\hspace{1.3cm}=\displaystyle z^ne^{2i\theta}\frac{p(z)}{p^*(z)},$\\
\noindent where
\begin{equation}\hspace{-1.8cm}p(z)=z^{n+2}+az^n+ \frac{1}{2}(2+an-n)e^{-i\theta}z^2+\frac{1}{2}(2a+an-n)e^{-i\theta}
\end{equation}
and \qquad $p^*(z)=z^{n+2}\overline{p\left(\frac{1}{\overline z}\right)}.$\\
\noindent Obviously, if $z_0$, $z_0\not=0,$ is a zero of $p$ then $\displaystyle\frac{1}{\overline{z}_0}$ is a zero of $p^*$. Hence, if $\alpha_1,\alpha_2\cdots \alpha_{n+2}$ are the zeros of $p$ (not necessarily distinct), then we can write
$$\displaystyle\widetilde{\omega}(z)=z^ne^{2i\theta}\frac{(z-\alpha_1)}{(1-\overline {\alpha}_1 z)}\frac{(z-\alpha_2)}{(1-\overline {\alpha}_2 z)}\cdots\frac{(z-\alpha_{n+2})}{(1-\overline {\alpha}_{n+2} z)}.$$
 For $|\alpha_i|\leq1$, since $\displaystyle \frac{(z-\alpha_i)}{(1-\overline {\alpha}_{i} z)}$ maps the closed unit disk onto itself, therefore to prove that $\displaystyle|\widetilde{\omega}|<1$ in $E,$ we shall show that all the zeros of polynomial $p$ i.e, $\alpha_1,\alpha_2,\cdots,\alpha_{n+2}$ lie inside or on the unit circle $|z|=1$ for $a\in \left({(n-2)}/{(n+2)},1\right)-\{n/(n+2)\}$ (as in the case when $a={(n-2)}/{(n+2)},$ we see from (5) that $\displaystyle|\widetilde{\omega}(z)|=\displaystyle|-z^ne^{i\theta}|<1$ and the case when $a=n/(n+2),$ where $z_0=0$ is a zero of $p,$ will be taken up separately). To do this we use Lemma 2.2 on the polynomial $ p(z)=z^{n+2}+az^n+ \frac{1}{2}\left(2+an-n\right)e^{-i\theta}z^2+\frac{1}{2}\left(2a+an-n\right)e^{-i\theta}$ and show that all the deteminants $M_k\,\, (k=1,2,3,\cdots,n+2)$ are positive, where $M_k$ is obtained by comparing the polynomial $p$ with polynomial $r$ (with degree $n+2$) of Lemma 2.2 and is given by
\[\indent\hspace{-6cm}M_{k}= det\begin{pmatrix}
\overline{B}_{k}\,^T& A_{k} \\
\overline{A}_{k}\,^T& B_{k}
\end{pmatrix}\,\,(k=1,2,3,\cdots,n+2).\] For entries of $A_k$ and $B_k$ we obviously have $a_{0}=\frac{1}{2}(2a+an-n)e^{-i\theta}, a_{1}=0, a_{2}=\frac{1}{2}(2+an-n)e^{-i\theta}, a_{3}=0, \cdots a_{n-1}=0,$ $a_{n}=a, a_{n+1}=0,$ and $a_{n+2}=1.$\\
Since $B_{k}$ is a non singular matrix, therefore,
\[\begin{pmatrix}
\overline{B}_{k}\,^T& A_{k} \\
\overline{A}_{k}\,^T& B_{k}
\end{pmatrix}\begin{pmatrix}
I& \Huge{0} \\
-B_{k}^{-1}\overline{A}_{k}\,^T& I
\end{pmatrix}=\begin{pmatrix}
\overline{B}_{k}\,^T- A_{k}B_{k}^{-1}\overline{A}_{k}\,^T& A_{k} \\
0 & B_{k}
\end{pmatrix}\] which gives
 \[\indent\hspace{-1cm}M_{k}= det\begin{pmatrix}
\overline{B}_{k}\,^T& A_{k} \\
\overline{A}_{k}\,^T& B_{k}
\end{pmatrix}=det\begin{bmatrix}
\overline{B}_{k}\,^T- A_{k}B_{k}^{-1}\overline{A}_{k}\,^T\end{bmatrix}.\]
 For $n=1,2,3$ and $4,$ as the proof of our theorem follows from the results in [\ref{ku and gu}] by substituting $\beta=\pi/2$, so we take $n\geq5.$ We consider the following cases.\\

{\bf Case 1.} \emph{ When $1 \leq\, k\, \leq\, n$ and $n\geq 5$ is odd}. In this case $A_k$ and $B_k$ are the following $k \times k$ matrices;
\scriptsize{
\[\indent\hspace{-2.5cm}A_{k}=\begin{pmatrix}
a_{0}&a_{1}&a_2 &\cdots &a_{k-1} \\
0&a_{0}&a_1&\cdots &a_{k-2} \\
0& 0&a_0&\cdots &a_{k-3}\\
\vdots & \vdots &\vdots& \ddots & \vdots\\
0&0&0&\cdots &a_{0}
\end{pmatrix}=\begin{pmatrix}
a_{0}&0&a_2 &\cdots &0 \\
0&a_{0}&0&\cdots &0 \\
0& 0&a_{0}&\cdots &0\\
\vdots & \vdots &\vdots& \ddots & \vdots\\
0&0&0&\cdots &a_{0}
\end{pmatrix}\quad \text{and}\quad B_{k}=\begin{pmatrix}
\overline{a}_{n+2}&\overline{a}_{n+1}&\overline{a}_{n} &\cdots &\overline {a}_{(n+2)-k+1} \\
0&\overline{a}_{n+2}&\overline{a}_{n+1}&\cdots &\overline{a}_{(n+2)-k+2}  \\
0& 0&\overline{a}_{n+2}&\cdots &\overline{a}_{(n+2)-k+3}\\
\vdots & \vdots &\vdots& \ddots & \vdots\\
0&0&0&\cdots &\overline{a}_{n+2}
\end{pmatrix}=\begin{pmatrix}
{1}&{0}&a &\cdots &0 \\
0&1&0&\cdots &0 \\
0& 0&1&\cdots &0\\
\vdots & \vdots &\vdots& \ddots & \vdots\\
0&0&0&\cdots &1
\end{pmatrix}.\]\\
\normalsize
So,
\scriptsize{
\[ \indent\hspace{-4cm}\overline{B}_{k}^{\,\,T}-A_{k}B_{k}^{-1}\overline{A}_{k}^{\,\,T}=\left( \begin{array}{llll}
1-a_0\overline{a}_0-\overline{a}_2(-aa_0+a_2)& \quad0 &-\overline{a}_0(-aa_0+a_2)+a\overline{a}_2(-aa_0+a_2)&\cdots\\
\quad0& 1-\overline{a}_0a_0-\overline{a}_2(-aa_0+a_2) &\quad0&\cdots \\
a-a_0\overline{a}_2&\quad0 &1-a_0\overline{a}_0-\overline{a}_2(-aa_0+a_2)\\
\quad\vdots &\quad\vdots &\quad\vdots&\ddots \\
\quad0&\quad0&\quad0&\cdots\\
\quad0&\quad0&\quad0&\cdots\\
\quad0&\quad0&\quad0&\cdots
\end{array} \right.\]}
\[\indent\hspace{2.5cm}\left.\begin{array}{clll}
(-a)^{\frac{k-5}{2}}[-\overline{a}_0(-aa_0+a_2)+a\overline{a}_2(-aa_0+a_2)]&\quad 0 &(-a)^{\frac{k-3}{2}}[-\overline{a}_0(-aa_0+a_2)] \\
\quad0 & (-a)^{\frac{k-5}{2}}[-\overline{a}_0(-aa_0+a_2)] &\qquad0\\
(-a)^{\frac{k-7}{2}}[-\overline{a}_0(-aa_0+a_2)+a\overline{a}_2(-aa_0+a_2)]&\quad 0&(-a)^{\frac{k-5}{2}}[-\overline{a}_0(-aa_0+a_2)]\\
\vdots&\quad\vdots &\qquad \vdots\\
1-a_0\overline{a}_0-\overline{a}_2(-aa_0+a_2)& \quad0&[-\overline{a}_0(-aa_0+a_2)]\\
0&1-a_0\overline{a}_0&\qquad0\\
a-a_0\overline{a}_2&\quad0 & 1-a_0\overline{a}_0
\end{array} \right).\]

\normalsize Now\\
\noindent{\bf (a)}  $1-\overline{a}_0a_0-\overline{a}_2(-aa_0+a_2)=\frac{1}{4}n(2-n+2a+an)(1-a)(2-a).$\\
${\bf (b)} -\overline{a}_0(-aa_0+a_2)+a\overline{a}_2(-aa_0+a_2)=\frac{1}{4}n(2-n+2a+an)(1-a)^3.$\\
\noindent{\bf (c)} $-\overline{a}_0(-aa_0+a_2)=\frac{1}{4}(n-2a-an)(2-n+2a+an)(1-a).$\\
\noindent{\bf (d)} $a-a_0\overline{a}_2=\frac{1}{4}n(2-n+2a+an)(1-a).\\$
\noindent{\bf (e)} $ 1-{a_0}\overline{a}_0=\frac{1}{4}(n+2)(2-n+2a+an)(1-a).$\\

If we write $\displaystyle L_r=\left(\frac{1}{4}\right)^r n^{r-2}(2-n+2a+an)^r(1-a)^r, r\in \mathbb{N},$ then
\scriptsize{
\[\indent\hspace{-1cm}M_k=L_k\,\,det\begin{pmatrix}
(2-a)&0&(1-a)^2 &\cdots&0 &(-a)^{\frac{k-3}{2}}(n-2a-an)\\
0&(2-a)&0&\cdots&(-a)^{\frac{k-5}{2}}(n-2a-an) &0 \\
1& 0&(2-a)&\cdots&0 &(-a)^{\frac{k-5}{2}}(n-2a-an)\\
\vdots & \vdots &\vdots& \ddots & \vdots & \vdots\\
0&0&0&\cdots&(n+2) &0\\
0&0&0&\cdots&0 &(n+2)
\end{pmatrix}\]}
\scriptsize{
\[\indent\hspace{-.5cm}= L_k\,\, det\begin{pmatrix}
(2-a)&0&(1-a)^2 &\cdots&0 &(-a)^{\frac{k-3}{2}}(n-2a-an)\\
0&(2-a)&0&\cdots&(-a)^{\frac{k-5}{2}}(n-2a-an) &0 \\
0& 0&\frac{3-2a}{2-a}&\cdots&0 &2(-a)^{\frac{k-5}{2}}\frac{(n-2a-an)}{2-a}\\
\vdots & \vdots &\vdots& \ddots & \vdots& \vdots\\
0&0&0&\cdots &\frac{n+k-1}{\frac{k-1}{2}-\frac{(k-3)a}{2}}&\quad 0\\
0&0&0&\cdots &0&\frac{n+k+1}{\frac{k+1}{2}-\frac{(k-1)a}{2}}
\end{pmatrix}\]}
\normalsize
$\indent\hspace{1cm}=L_k(n+k-1)(n+k+1).$\\
As $L_k$ is positive for $a \in \left({(n-2)}/{(n+2)},1\right)-\{n/(n+2)\},$ so $M_k$ is positive in this case.\\

\normalsize
\indent\hspace{-.7cm}{\bf Case 2.} \emph{ When $1 \leq\, k\, \leq\, n$ and $n\geq 5$ is even}. In this case\\
\scriptsize{
\[\indent\hspace{-3cm}A_{k}=\begin{pmatrix}
a_{0}&a_{1}&a_{2} &\cdots &{a_{k-1}} \\
0&a_{0}&a_1&\cdots &a_{k-2} \\
0& 0&a_0&\cdots &a_{k-3}\\
\vdots & \vdots &\vdots& \ddots & \vdots\\
0&0&0&\cdots &a_{0}
\end{pmatrix}=\begin{pmatrix}
a_{0}&0&a_{2} &\cdots &0 \\
0&a_{0}&0&\cdots &0 \\
0& 0&a_0&\cdots &0\\
\vdots & \vdots &\vdots& \ddots & \vdots\\
0&0&0&\cdots &a_{0}
\end{pmatrix}, \quad B_{k}=\begin{pmatrix}
\overline{a}_{n+2}&\overline{a}_{n+1}&\overline {a}_{n} &\cdots &\overline{ a}_{(n+2)-k+1} \\
0&\overline{a}_{n+2}&\overline{a}_{n+1}&\cdots &\overline{ a}_{(n+2)-k+2} \\
0& 0&\overline{a}_{n+2}&\cdots &\overline{ a}_{(n+2)-k+3}\\
\vdots & \vdots &\vdots& \ddots & \vdots\\
0&0&0&\cdots &\overline{a}_{n+2}
\end{pmatrix}=\begin{pmatrix}
1&0&a &\cdots &0 \\
0&1&0&\cdots &0 \\
0& 0&1&\cdots &0\\
\vdots & \vdots &\vdots& \ddots & \vdots\\
0&0&0&\cdots &1
\end{pmatrix}\]}
\normalsize
and so,
 \scriptsize{
\[ \indent\hspace{-4cm}\overline{B}_{k}^{\,\,T}-A_{k}B_{k}^{-1}\overline{A}_{k}^{\,\,T}=\left( \begin{array}{llll}
1-a_0\overline{a}_0-\overline{a}_2(-aa_0+a_2)& \quad0 &-\overline{a}_0(-aa_0+a_2)+a\overline{a}_2(-aa_0+a_2)&\cdots\\
\quad0& 1-\overline{a}_0a_0-\overline{a}_2(-aa_0+a_2) &\quad0&\cdots \\
a-a_0\overline{a}_2&\quad0 &1-a_0\overline{a}_0-\overline{a}_2(-aa_0+a_2)\\
\quad\vdots &\quad\vdots &\quad\vdots&\ddots \\
\quad0&\quad0&\quad0&\cdots\\
\quad0&\quad0&\quad0&\cdots\\
\quad0&\quad0&\quad0&\cdots
\end{array} \right.\]}
\[\indent\hspace{2.5cm}\left.\begin{array}{clll}
\qquad0&(-a)^{\frac{k-4}{2}}[-\overline{a}_0(-aa_0+a_2)] &\qquad 0\\
(-a)^{\frac{k-6}{2}}[-\overline{a}_0(-aa_0+a_2)+a\overline{a}_2(-aa_0+a_2)] & \qquad 0 &(-a)^{\frac{k-4}{2}}[-\overline{a}_0(-aa_0+a_2)] \\
\qquad 0&(-a)^{\frac{k-6}{2}}[-\overline{a}_0(-aa_0+a_2)]&\qquad 0\\
\qquad\vdots&\qquad\vdots &\qquad \vdots\\
1-a_0\overline{a}_0-\overline{a}_2(-aa_0+a_2)& \qquad0&[-\overline{a}_0(-aa_0+a_2)]\\
\qquad0&1-a_0\overline{a}_0& \qquad0\\
a-a_0\overline{a}_2&\qquad0 & 1-a_0\overline{a}_0
\end{array} \right).\]\\
\normalsize
Therefore,
\scriptsize{
\[\indent\hspace{-2cm}M_k= L_K\,\,det\begin{pmatrix}
(2-a)&0&(1-a)^2 &\cdots&(-a)^{\frac{k-4}{2}}(n-2a-an) &\qquad 0\\
0&(2-a)&0&\cdots&\qquad 0 &(-a)^{\frac{k-4}{2}}(n-2a-an) \\
1& 0&(2-a)&\cdots&(-a)^{\frac{k-6}{2}}(n-2a-an) &\qquad 0\\
\vdots & \vdots &\vdots& \ddots& \qquad\vdots &\qquad \vdots\\
0&0&0&\cdots&(n+2) &\qquad0\\
0&0&0&\cdots&0 &(n+2)
\end{pmatrix}\]}
\scriptsize{
\[\indent\hspace{-1.5cm}= L_K\,\,det\begin{pmatrix}
(2-a)&0&(1-a)^2 &\cdots&(-a)^{\frac{k-4}{2}}(n-2a-an) &\qquad 0\\
0&(2-a)&0&\cdots&\qquad 0 &(-a)^{\frac{k-4}{2}}(n-2a-an) \\
0& 0&\frac{3-2a}{2-a}&\cdots&2(-a)^{\frac{k-6}{2}}\frac{(n-2a-an)}{2-a} &\qquad 0\\
\vdots & \vdots &\vdots& \ddots & \qquad\vdots&\qquad \vdots\\
0&0&0&\cdots &\frac{n+k}{\frac{(k-1)+1}{2}-\frac{((k-1)-1)a}{2}}&\qquad 0\\
0&0&0&\cdots &\qquad 0&\frac{n+k}{\frac{k}{2}-\frac{(k-2)a}{2}}
\end{pmatrix}\]}
\normalsize
$\indent\hspace{.5cm}=L_k(n+k)(n+k),$ which is positive for $a \in \left({(n-2)}/{(n+2)},1\right)-\{n/(n+2)\}.$\\

\noindent{\bf Case 3.} \emph{When $k=n+1$ and $n$ is an odd positive integer}. We have,
\scriptsize{
\[\indent\hspace{-3cm}A_{n+1}=\begin{pmatrix}
a_{0}&a_{1}&a_{2} &\cdots &{a_n} \\
0&a_{0}&a_1&\cdots &a_{n-1} \\
0& 0&a_0&\cdots &a_{n-2}\\
\vdots & \vdots &\vdots& \ddots & \vdots\\
0&0&0&\cdots &a_{0}
\end{pmatrix}=\begin{pmatrix}
a_{0}&0&a_{2} &\cdots &a \\
0&a_{0}&0&\cdots &0 \\
0& 0&a_0&\cdots &0\\
\vdots & \vdots &\vdots& \ddots & \vdots\\
0&0&0&\cdots &a_{0}
\end{pmatrix}\quad and \quad B_{n+1}=\begin{pmatrix}
\overline{a}_{n+2}&\overline{a}_{n+1}&\overline {a}_{n} &\cdots &\overline{ a}_2 \\
0&\overline{a}_{n+2}&\overline{a}_{n+1}&\cdots &\overline{ a}_3 \\
0& 0&\overline{a}_{n+2}&\cdots &\overline{ a}_4\\
\vdots & \vdots &\vdots& \ddots & \vdots\\
0&0&0&\cdots &\overline{a}_{n+2}
\end{pmatrix}=\begin{pmatrix}
1&0&a &\cdots &\overline{a}_2 \\
0&1&0&\cdots &0 \\
0& 0&1&\cdots &0\\
\vdots & \vdots &\vdots& \ddots & \vdots\\
0&0&0&\cdots &1
\end{pmatrix}.\]}
\normalsize
 Thus
\scriptsize{
\[ \indent\hspace{-3.3cm}\overline{B}_{n+1}^{\,\,T}-A_{n+1}B_{n+1}^{-1}\overline{A}_{n+1}^{\,\,T}=\left( \begin{array}{llll}
1-a_0\overline{a}_0-\overline{a}_2(-aa_0+a_2)-a(-a_0\overline{a}_2+a)&\qquad 0&-\overline{a}_0(-aa_0+a_2)+a\overline{a}_2(-aa_0+a_2)&\quad\cdots\\
(-a)^{\frac{n-1}{2}}(-aa_0+a_2)&1-a_0\overline{a}_0-\overline{a}_2(-aa_0+a_2)&\quad 0 &\quad\cdots \\
a-a_0\overline{a}_2&\qquad 0 &1-a_0\overline{a}_0-\overline{a}_2(-aa_0+a_2)&\quad\cdots\\
\qquad\vdots &\qquad\vdots &\quad\vdots &\quad \ddots \\
-a(-aa_0+a_2)&\qquad0&\quad0&\quad\cdots\\
\qquad0&\qquad0&\quad0&\quad\cdots\\
(-aa_0+a_2)&\qquad0&\quad0&\quad\cdots
\end{array} \right.\]}
\[\indent\hspace{3cm}\left.\begin{array}{clll}
-\overline{a}_2(-a_0\overline{a}_2+a)&(-a)^{\frac{n-3}{2}}[-\overline{a}_0(-aa_0+a_2)] &-\overline{a}_0(-a_0\overline{a}_2+a) \\
a^{\frac{n-5}{2}}[-\overline{a}_0(-aa_0+a_2)+a\overline{a}_2(-aa_0+a_2)] &\qquad 0&(-a)^{\frac{n-3}{2}}[-\overline{a}_0(-aa_0+a_2)]\\
0&(-a)^{\frac{n-5}{2}}[-\overline{a}_0(-aa_0+a_2)]&\qquad 0\\
\vdots &\qquad\vdots &\qquad \vdots\\
1-a_0\overline{a}_0-\overline{a}_2(-aa_0+a_2)&\qquad0&[-\overline{a}_0(-aa_0+a_2)]\\
0 & 1-a_0\overline{a}_0&\qquad 0\\
a-a_0\overline{a}_2 &\qquad 0 &  1-a_0\overline{a}_0
\end{array} \right).\]\\
\normalsize If $E_j$ is the $j^{th}$ column of $\overline{B}_{n+1}^{\,\,T}-A_{n+1}B_{n+1}^{-1}\overline{A}_{n+1}^{\,\,T},$ then the entries of $E_j\,\, (j=2,3,\cdots,n-2)$ and $E_n$ are identical to those of the corresponding columns of $\overline{B}_{k}^{\,\,T}-A_{k}B_{k}^{-1}\overline{A}_{k}^{\,\,T}$ in Case 2. However, the entries for $E_1$, $E_{n-1}$, and $E_{n+1}$ are different. We split $E_1$, $E_{n-1}$ and $E_{n+1}$ in the following way:\\
$E_1=F_1+G_1+H_1$,  $E_{n-1}=F_{n-1}+G_{n-1}$, $E_{n+1}=F_{n+1}+G_{n+1},$ where\\
$F_1^{T}=(1-a_0\overline{a}_0-\overline{a}_2(-aa_0+a_2),0,a-a_0\overline{a}_2,0,\cdots,0),$\\
$G_1^{T}=(0,(-a)^{\frac{n-1}{2}}(-aa_0+a_2),0,(-a)^{\frac{n-3}{2}}(-aa_0+a_2),0\cdots,(-aa_0+a_2)),$\\
\vspace{.3cm}
$H_1^{T}=(-a(-a_0\overline{a}_2+a),0,0,\cdots,0);$\\
$F_{n-1}^{T}=(0,(-a)^{\frac{n-5}{2}}[-\overline{a}_0(-aa_0+a_2)+a\overline{a}_2(-aa_0+a_2)],0,(-a)^{\frac{n-7}{2}}[-\overline{a}_0(-aa_0+a_2)+\\\indent\hspace{1cm}a\overline{a}_2(-aa_0+a_2)],0\cdots, a-a_0\overline{a}_2),$\\
\vspace{.3cm}
$G_{n-1}^{T}=(-\overline{a}_2(-a_0\overline{a}_2+a),0,0,\cdots,0);$\\
$F_{n+1}^{T}=(0,(-a)^{\frac{n-3}{2}}[-\overline{a}_0(-aa_0+a_2)],0,(-a)^{\frac{n-3}{2}}[-\overline{a}_0(-aa_0+a_2)],0,\cdots,1-a_0\overline{a}_0)$ and\\
$G_{n+1}^{T}=[-\overline{a}_0(-a_0\overline{a}_2+a),0,0,\cdots,0].$\\

\indent\hspace{-1cm} Now $det[\overline{B}_{n+1}^{\,\,T}-A_{n+1}B_{n+1}^{-1}\overline{A}_{n+1}^{\,\,T}]$=$det[E_1E_2\cdots E_nE_{n+1}]$\\
\indent\hspace{1cm}$= det[F_1E_2\cdots  F_{n-1}E_nF_{n+1}]$ + \,$det[G_1E_2\cdots  F_{n-1}E_nF_{n+1}]$ + \,$det[H_1E_2\cdots  F_{n-1}E_nF_{n+1}]$\\
\indent\hspace{1cm}+ \,$det[F_1E_2\cdots  F_{n-1}E_nG_{n+1}]$ + $det[G_1E_2\cdots  F_{n-1}E_nG_{n+1}]$ + \,$det[H_1E_2\cdots  F_{n-1}E_nG_{n+1}]$\\
\indent\hspace{1cm}+ \,$det[F_1E_2\cdots  G_{n-1}E_nF_{n+1}]$ + \,$det[G_1E_2\cdots G_{n-1}E_nF_{n+1}]$ + $det[H_1E_2\cdots G_{n-1}E_nF_{n+1}]$\\
\indent\hspace{1cm}+ \,$det[F_1E_2\cdots G_{n-1}E_nG_{n+1}]$ + \,$det[G_1E_2\cdots G_{n-1}E_nG_{n+1}]$ + \,$det[H_1E_2\cdots G_{n-1}E_nG_{n+1}].$\\
Out of these twelve determinants, only four given in Table 1 will be non-zero.
}\normalsize
Adding all these determinants we get\\
\vspace{.2cm}
\indent\hspace{2.3cm}$M_{n+1}=det[\overline{B}_{n+1}^{\,\,T}-A_{n+1}\,B_{n+1}^{-1}\,\overline{A}_{n+1}^{\,\,T}]\\\indent\hspace{3.3cm}=\displaystyle L_{n+1}\,8n>0,$ for $a \in \left({(n-2)}/{(n+2)},1\right)-\{n/(n+2)\}.$\\

{\bf Case 4.} \emph{When $k=n+1$ and $n$ is an even positive integer.} Here
\scriptsize{
\[\indent\hspace{-3cm}A_{n+1}=\begin{pmatrix}
a_{0}&a_{1}&a_{2} &\cdots &{a_n} \\
0&a_{0}&a_1&\cdots &a_{n-1} \\
0& 0&a_0&\cdots &a_{n-2}\\
\vdots & \vdots &\vdots& \ddots & \vdots\\
0&0&0&\cdots &a_{0}
\end{pmatrix}=\begin{pmatrix}
a_{0}&0&a_{2} &\cdots &a \\
0&a_{0}&0&\cdots &0 \\
0& 0&a_0&\cdots &0\\
\vdots & \vdots &\vdots& \ddots & \vdots\\
0&0&0&\cdots &a_{0}
\end{pmatrix}\quad and \quad B_{n+1}=\begin{pmatrix}
\overline{a}_{n+2}&\overline{a}_{n+1}&\overline {a}_{n} &\cdots &\overline{ a}_2 \\
0&\overline{a}_{n+2}&\overline{a}_{n+1}&\cdots &\overline{ a}_3 \\
0& 0&\overline{a}_{n+2}&\cdots &\overline{ a}_4\\
\vdots & \vdots &\vdots& \ddots & \vdots\\
0&0&0&\cdots &\overline{a}_{n+2}
\end{pmatrix}=\begin{pmatrix}
1&0&a &\cdots &\overline{a}_2 \\
0&1&0&\cdots &0 \\
0& 0&1&\cdots &0\\
\vdots & \vdots &\vdots& \ddots & \vdots\\
0&0&0&\cdots &1
\end{pmatrix}.\]}
\normalsize
 \indent\hspace{-2.2cm} One can verify that $\overline{B}_{n+1}^{\,\,T}-A_{n+1}B_{n+1}^{-1}\overline{A}_{n+1}^{\,\,T}=$
\scriptsize{
\[\indent\hspace{-3.2cm} \left( \begin{array}{llll}
1-a_0\overline{a}_0-\overline{a}_2(-aa_0+a_2)-a(-a_0\overline{a}_2+a)+(-a)^{\frac{n}{2}}(-aa_0+a_2)&\qquad \qquad0&-\overline{a}_0(-aa_0+a_2)+a\overline{a}_2(-aa_0+a_2)&\quad\cdots\\
\qquad0&1-a_0\overline{a}_0-\overline{a}_2(-aa_0+a_2)&\qquad\qquad 0 &\quad\cdots \\
a-a_0\overline{a}_2+(-a)^{\frac{n-2}{2}}(-aa_0+a_2)&\qquad\qquad 0 &1-a_0\overline{a}_0-\overline{a}_2(-aa_0+a_2)&\quad\cdots\\
\qquad\vdots &\qquad\qquad\vdots &\qquad\qquad\vdots &\quad \ddots \\
-a(-aa_0+a_2)&\qquad\qquad0&\qquad\qquad0&\quad\cdots\\
\qquad0&\qquad\qquad0&\qquad\qquad0&\quad\cdots\\
(-aa_0+a_2)&\qquad\qquad0&\qquad\qquad0&\quad\cdots
\end{array} \right.\]}

\[\indent\hspace{-1.1cm}\left.\begin{array}{clll}
a^{\frac{n-4}{2}}[-\overline{a}_0(-aa_0+a_2)+a\overline{a}_2(-aa_0+a_2)]-\overline{a}_2(-a_0\overline{a}_2+a)&\qquad 0 &(-a)^{\frac{n-2}{2}}[-\overline{a}_0(-aa_0+a_2)] -\overline{a}_0(-a_0\overline{a}_2+a) \\
\quad0 &(-a)^{\frac{n-4}{2}}[-\overline{a}_0(-aa_0+a_2)] &\qquad 0\\
a^{\frac{n-6}{2}}[-\overline{a}_0(-aa_0+a_2)+a\overline{a}_2(-aa_0+a_2)]&\qquad 0&(-a)^{\frac{n-4}{2}}[-\overline{a}_0(-aa_0+a_2)] \\
\vdots &\qquad\vdots &\qquad \vdots\\
1-a_0\overline{a}_0-\overline{a}_2(-aa_0+a_2)&\qquad0&[-\overline{a}_0(-aa_0+a_2)]\\
\qquad0 & 1-a_0\overline{a}_0&\qquad 0\\
a-a_0\overline{a}_2 &\qquad 0 &  1-a_0\overline{a}_0
\end{array} \right).\]\\
\normalsize  We split $E_1$, $E_{n-1}$ and $E_{n+1}$ in the following way (enteries in the remaining columns will be identical to those of the corresponding columns of Case 1).\\
$E_1=F_1+G_1+H_1$,  $E_{n-1}=F_{n-1}+G_{n-1}$, $E_{n+1}=F_{n+1}+G_{n+1},$ where\\
$F_1^{T}=(1-a_0\overline{a}_0-\overline{a}_2(-aa_0+a_2),0,a-a_0\overline{a}_2,0,\cdots,0),$\\
$G_1^{T}=((-a)^{\frac{n}{2}}(-aa_0+a_2),0,(-a)^{\frac{n-2}{2}}(-aa_0+a_2),0\cdots,(-aa_0+a_2)),$\\
\vspace{.3cm}
$H_1^{T}=(-a(-a_0\overline{a}_2+a),0,0,\cdots,0);$\\
$F_{n-1}^{T}=((-a)^{\frac{n-4}{2}}[-\overline{a}_0(-aa_0+a_2)+a\overline{a}_2(-aa_0+a_2)],0,(-a)^{\frac{n-6}{2}}[-\overline{a}_0(-aa_0+a_2)+\\\indent\hspace{1cm}a\overline{a}_2(-aa_0+a_2)],0\cdots, a-a_0\overline{a}_2),$\\
\vspace{.3cm}
$G_{n-1}^{T}=(-\overline{a}_2(-a_0\overline{a}_2+a),0,0,\cdots,0);$\\
$F_{n+1}^{T}=((-a)^{\frac{n-2}{2}}[-\overline{a}_0(-aa_0+a_2)],0,(-a)^{\frac{n-4}{2}}[-\overline{a}_0(-aa_0+a_2)],0,\cdots,1-a_0\overline{a}_0)$ and\\
$G_{n+1}^{T}=(-\overline{a}_0(-a_0\overline{a}_2+a),0,0,\cdots,0).$\\

As in Case 3,\,\, $M_{n+1}=det[E_1E_2\cdots E_nE_{n+1}]= det[F_1E_2\cdots  F_{n-1}E_nF_{n+1}] $\\
\indent\hspace{1cm}+\,$det[F_1E_2\cdots  F_{n-1}E_nG_{n+1}] +det[F_1E_2\cdots  G_{n-1}E_nF_{n+1}]+det[G_1E_2\cdots  F_{n-1}E_nF_{n+1}]$ \\
\indent\hspace{1cm}+ \,$det[G_1E_2\cdots  G_{n-1}E_nF_{n+1}]+det[G_1E_2\cdots  F_{n-1}E_nG_{n+1}]+det[H_1E_2\cdots  F_{n-1}E_nF_{n+1}],$\\
 as all the remaining determinants (total 5 in number) shall vanish. Values of non-zero determinants are given in Table 2.
\normalsize
Adding values of all these determinants we see that\\
when  $\displaystyle\frac{n}{2}$ is odd,
\vspace{.1cm}
\indent$M_{n+1}=\displaystyle L_{n+1}\, 8n(1-\cos\theta)$\\
\indent\hspace{3.8cm}$>0,$\,\,  for $\theta\not=2m\pi,\, m\in \mathbb{Z}$ and \\
when $\displaystyle \frac{n}{2}$ is even,
\vspace{.1cm}
\indent $M_{n+1}=\displaystyle L_{n+1}\,8n(1+\cos\theta)$\\
\indent\hspace{3.9cm}$>0,$ \,\, for $\theta\not=(2m+1)\pi,\, m\in \mathbb{Z}.$\\
Cases for $\theta=2m\pi\,\, {\rm and}\,\,\, \theta=(2m+1)\pi$ will be considered separately in the later part of this proof.
\begin{landscape}
{\bf Case 5.} \emph{When $k=n+2$ and $n$ is odd}. In this case, $\overline{B}_{n+2}^{\,\,T}-A_{n+2}B_{n+2}^{-1}\overline{A}_{n+2}^{\,\,T}=$
\scriptsize{
\[\indent\hspace{-5.5cm}\left( \begin{array}{llll}
1-a_0\overline{a}_0-\overline{a}_2(-aa_0+a_2)-a(-a_0\overline{a}_2+a)&(-a)^{\frac{n+1}{2}}(-aa_0+a_2)&-\overline{a}_0(-aa_0+a_2)+a\overline{a}_2(-aa_0+a_2)&\quad\cdots\\
(-a)^{\frac{n-1}{2}}(-aa_0+a_2)&1-a_0\overline{a}_0-\overline{a}_2(-aa_0+a_2)-a(-a_0\overline{a}_2+a)&\quad 0 &\quad\cdots \\
a-a_0\overline{a}_2&(-a)^{\frac{n-1}{2}}(-aa_0+a_2)&1-a_0\overline{a}_0-\overline{a}_2(-aa_0+a_2)&\quad\cdots\\
(-a)^{\frac{n-3}{2}}(-aa_0+a_2)&a-a_0\overline{a}_2&\qquad0&\quad\cdots\\
\qquad\vdots &\qquad\vdots &\qquad\vdots &\quad \ddots \\
\qquad0&-a(-aa_0+a_2)&\qquad0\\
(-aa_0+a_2)&\qquad0&\qquad0&\quad\cdots\\
\qquad0&(-aa_0+a_2)&\qquad0&\quad\cdots
\end{array} \right.\]}

\[\indent\hspace{1cm}\left.\begin{array}{clll}
-\overline{a}_2(-a_0\overline{a}_2+a)&a^{\frac{n-3}{2}}[-\overline{a}_0(-aa_0+a_2)+a\overline{a}_2(-aa_0+a_2)]& -\overline{a}_0(-a_0\overline{a}_2+a)&(-a)^{\frac{n-1}{2}}[-\overline{a}_0(-aa_0+a_2)] \\
(-a)^{\frac{n-5}{2}}[-\overline{a}_0(-aa_0+a_2)+a\overline{a}_2(-aa_0+a_2)]&-\overline{a}_2(-a_0\overline{a}_2+a)&(-a)^{\frac{n-3}{2}}[-\overline{a}_0(-aa_0+a_2)]&-\overline{a}_0(-a_0\overline{a}_2+a)\\
0&a^{\frac{n-5}{2}}[-\overline{a}_0(-aa_0+a_2)+a\overline{a}_2(-aa_0+a_2)]&\qquad 0&(-a)^{\frac{n-3}{2}}[-\overline{a}_0(-aa_0+a_2)]\\
(-a)^{\frac{n-7}{2}}[-\overline{a}_0(-aa_0+a_2)+a\overline{a}_2(-aa_0+a_2)]&\qquad0&(-a)^{\frac{n-5}{2}}[-\overline{a}_0(-aa_0+a_2)]&\qquad0\\
\vdots &\qquad\vdots &\qquad \vdots&\qquad \vdots\\
 0&1-a_0\overline{a}_0-\overline{a}_2(-aa_0+a_2)&\qquad 0&[-\overline{a}_0(-aa_0+a_2)]\\
a-a_0\overline{a}_2 &\qquad 0 &  1-a_0\overline{a}_0&\qquad0\\
0&a-a_0\overline{a}_2&\qquad0&1-a_0\overline{a}_0
\end{array} \right)\]\\
\normalsize
\noindent If $E_j$ $(j=1,2,\cdots,n+2)$ are the columns of $\overline{B}_{n+2}^{\,\,T}-A_{n+2}B_{n+2}^{-1}\overline{A}_{n+2}^{\,\,T}$ then we split $E_1$,\,$E_2$,\,$E_{n-1}$,\,$E_n$,\,$E_{n+1}$ and $E_{n+2}$ as under (enteries in the remaining columns will be identical to those of the corresponding columns of Case 1).\\
$E_1=F_1+G_1+H_1$, $E_2=F_2+G_2+H_2$, $E_{n-1}=F_{n-1}+G_{n-1}$, $E_n=F_n+G_n$,  $E_{n+1}=F_{n+1}+G_{n+1}$ and $E_{n+2}=F_{n+2}+G_{n+2},$ where\\
$F_1^{T}=(1-a_0\overline{a}_0-\overline{a}_2(-aa_0+a_2),0,a-a_0\overline{a}_2,0,\cdots,0),$\\
$G_1^{T}=(0,(-a)^{\frac{n-1}{2}}(-aa_0+a_2),0,(-a)^{\frac{n-3}{2}}(-aa_0+a_2),0\cdots,(-aa_0+a_2),0),$\\
\vspace{.3cm}
$H_1^{T}=(-a(-a_0\overline{a}_2+a),0,0,\cdots,0);$\\
\end{landscape}
\noindent$F_2^{T}=(0,1-a_0\overline{a}_0-\overline{a}_2(-aa_0+a_2),0,a-a_0\overline{a}_2,0,\cdots,0),$\\
$G_2^{T}=((-a)^{\frac{n+1}{2}}(-aa_0+a_2),0,(-a)^{\frac{n-1}{2}}(-aa_0+a_2),0,(-a)^{\frac{n-3}{2}}(-aa_0+a_2),0\cdots,(-aa_0+a_2)),$\\
\vspace{.3cm}
 $H_2^{T}=(0,-a(-a_0\overline{a}_2+a),0,0,\cdots,0);$\\
$F_{n-1}^{T}=(0,(-a)^{\frac{n-5}{2}}[-\overline{a}_0(-aa_0+a_2)+a\overline{a}_2(-aa_0+a_2)],0,(-a)^{\frac{n-7}{2}}[-\overline{a}_0(-aa_0+a_2)+\\\indent\hspace{1cm}a\overline{a}_2(-aa_0+a_2)],0\cdots, a-a_0\overline{a}_2,0),$\\
\vspace{.3cm}
$G_{n-1}^{T}=(-\overline{a}_2(-a_0\overline{a}_2+a),0,0,\cdots,0);$\\
$F_{n}^{T}=((-a)^{\frac{n-3}{2}}[-\overline{a}_0(-aa_0+a_2)+a\overline{a}_2(-aa_0+a_2)],0,(-a)^{\frac{n-5}{2}}[-\overline{a}_0(-aa_0+a_2)+\\\indent\hspace{1cm}a\overline{a}_2(-aa_0+a_2)],0\cdots, a-a_0\overline{a}_2),$\\
\vspace{.3cm}
$G_{n}^{T}=(0,-\overline{a}_2(-a_0\overline{a}_2+a),0,0,\cdots,0);$\\
$F_{n+1}^{T}=(0,(-a)^{\frac{n-3}{2}}[-\overline{a}_0(-aa_0+a_2)],0,(-a)^{\frac{n-5}{2}}[-\overline{a}_0(-aa_0+a_2)],0,\cdots,1-a_0\overline{a}_0,0),$\\
\vspace{.3cm}
$G_{n+1}^{T}=(-\overline{a}_0(-a_0\overline{a}_2+a),0,0,\cdots,0)$ and \\
$F_{n+2}^{T}=((-a)^{\frac{n-1}{2}}[-\overline{a}_0(-aa_0+a_2)],0,(-a)^{\frac{n-3}{2}}[-\overline{a}_0(-aa_0+a_2)],0,(-a)^{\frac{n-5}{2}}[-\overline{a}_0(-aa_0+a_2)\\\indent\hspace{1cm}0,\cdots,1-a_0\overline{a}_0),$\\
$G_{n+2}^{T}=(0,-\overline{a}_0(-a_0\overline{a}_2+a),0,0,\cdots,0).$\\
As a consequence of above splitting of columns, $M_{n+2}$ can be written as a sum of $144$ determinants out of which $123$ shall vanish and values of remaining $21$ non-zero determinants are listed in Table 3. Adding all these determinants we get\\
\vspace{.3cm}
\indent\hspace{2.3cm}$M_{n+2}=det[\overline{B}_{n+2}^{\,\,T}-A_{n+2}\,B_{n+2}^{-1}\,\overline{A}_{n+2}^{\,\,T}]\\
\vspace{.3cm}
\indent\hspace{3.3cm}=\displaystyle L_{n+2}\,[8(1+\cos2\theta)]$\\
\vspace{.3cm}
\indent\hspace{3.4cm}$>0,$  for $\theta\not=(2m+1)\frac{\pi}{2},\, m\in \mathbb{Z} .$\\
The case when $\theta=(2m+1)\frac{\pi}{2}$ will be considered separately.\\
\begin{landscape}
{\bf Case 6.} \emph{When $k=n+2$ and $n$ is even}. In this case, $\overline{B}_{n+2}^{\,\,T}-A_{n+2}B_{n+2}^{-1}\overline{A}_{n+2}^{\,\,T}$=
\scriptsize{
\[\indent\hspace{-5.3cm}\left( \begin{array}{llll}
1-a_0\overline{a}_0-\overline{a}_2(-aa_0+a_2)-a(-a_0\overline{a}_2+a)+(-a)^{\frac{n}{2}}(-aa_0+a_2)&\qquad0&-\overline{a}_0(-aa_0+a_2)+a\overline{a}_2(-aa_0+a_2)&\cdots\\
\qquad0&1-a_0\overline{a}_0-\overline{a}_2(-aa_0+a_2)-a(-a_0\overline{a}_2+a)+(-a)^{\frac{n}{2}}(-aa_0+a_2)&0&\cdots \\
a-a_0\overline{a}_2+(-a)^{\frac{n-2}{2}}(-aa_0+a_2)&\qquad0&1-a_0\overline{a}_0-\overline{a}_2(-aa_0+a_2)&\cdots\\
\qquad0&a-a_0\overline{a}_2+(-a)^{\frac{n-2}{2}}(-aa_0+a_2)&0&\cdots\\
\qquad\vdots &\qquad\vdots&\vdots &\ddots  \\
\qquad0& -a(-aa_0+a_2)&0&\cdots\\
(-aa_0+a_2)&\qquad0&0 &\cdots\\
\qquad0&(-aa_0+a_2)&0&\cdots
\end{array} \right.\]

\[\indent\hspace{-2.5cm}\left.\begin{array}{ccll}
(-a)^{\frac{n-4}{2}}[-\overline{a}_0(-aa_0+a_2)+a\overline{a}_2(-aa_0+a_2)]-\overline{a}_2(-a_0\overline{a}_2+a)&0&(-a)^{\frac{n-2}{2}}[-\overline{a}_0(-aa_0+a_2)]-\overline{a}_0(-a_0\overline{a}_2+a)&\qquad0 \\
\qquad0&(-a)^{\frac{n-4}{2}}[-\overline{a}_0(-aa_0+a_2)+a\overline{a}_2(-aa_0+a_2)]-\overline{a}_2(-a_0\overline{a}_2+a)&0&\indent\hspace{-2.3cm}(-a)^{\frac{n-2}{2}}[-\overline{a}_0(-aa_0+a_2)]-\overline{a}_0(-a_0\overline{a}_2+a)\\
(-a)^{\frac{n-6}{2}}[-\overline{a}_0(-aa_0+a_2)+a\overline{a}_2(-aa_0+a_2)]&0&(-a)^{\frac{n-4}{2}}[-\overline{a}_0(-aa_0+a_2)]&\qquad0\\
\qquad0&(-a)^{\frac{n-6}{2}}[-\overline{a}_0(-aa_0+a_2)+a\overline{a}_2(-aa_0+a_2)]&0&(-a)^{\frac{n-4}{2}}[-\overline{a}_0(-aa_0+a_2)]\\
\qquad \vdots&\vdots &\vdots&\qquad \vdots\\
\qquad0&1-a_0\overline{a}_0-\overline{a}_2(-aa_0+a_2)& 0&[-\overline{a}_0(-aa_0+a_2)]\\
 \quad a-a_0\overline{a}_2&0 &  1-a_0\overline{a}_0&\qquad0\\
\qquad0&a-a_0\overline{a}_2&0&1-a_0\overline{a}_0
\end{array} \right).\]}\\

\normalsize
\indent\hspace{-.6cm} As in the last case, here we split the columns $E_1$,\,$E_2$,\,$E_{n-1}$,\,$E_n$,\,$E_{n+1}$ and $E_{n+2}$ as
$E_1=F_1+G_1+H_1$, $E_2=F_2+G_2+H_2$, $E_{n-1}=F_{n-1}+G_{n-1}$, $E_n=F_n+G_n$,  $E_{n+1}=F_{n+1}+G_{n+1}$ and $E_{n+2}=F_{n+2}+G_{n+2},$ where\\
$F_1^{T}=(1-a_0\overline{a}_0-\overline{a}_2(-aa_0+a_2),0,a-a_0\overline{a}_2,0,\cdots,0),$\\
$G_1^{T}=((-a)^{\frac{n}{2}}(-aa_0+a_2),0,(-a)^{\frac{n-2}{2}}(-aa_0+a_2),0\cdots,(-aa_0+a_2),0),$\\
$H_1^{T}=(-a(-a_0\overline{a}_2+a),0,0,\cdots,0);$\\
\end{landscape}
\noindent$F_2^{T}=(0,1-a_0\overline{a}_0-\overline{a}_2(-aa_0+a_2),0,a-a_0\overline{a}_2,0,\cdots,0),$\\
$G_2^{T}=(0,(-a)^{\frac{n}{2}}(-aa_0+a_2),0,(-a)^{\frac{n-1}{2}}(-aa_0+a_2),0,(-a)^{\frac{n-2}{2}}(-aa_0+a_2),0\cdots,(-aa_0+a_2)),$\\
\vspace{.3cm}
$H_2^{T}=(0,-a(-a_0\overline{a}_2+a),0,0,\cdots,0);$\\
\noindent$F_{n-1}^{T}=((-a)^{\frac{n-4}{2}}[-\overline{a}_0(-aa_0+a_2)+a\overline{a}_2(-aa_0+a_2)],0,(-a)^{\frac{n-6}{2}}[-\overline{a}_0(-aa_0+a_2)+\\\indent\hspace{1cm}a\overline{a}_2(-aa_0+a_2)],0\cdots, a-a_0\overline{a}_2,0),$\\
\vspace{.3cm}
$G_{n-1}^{T}=(-\overline{a}_2(-a_0\overline{a}_2+a),0,0,\cdots);$\\
$F_{n}^{T}=(0,(-a)^{\frac{n-4}{2}}[-\overline{a}_0(-aa_0+a_2)+a\overline{a}_2(-aa_0+a_2)],0,(-a)^{\frac{n-6}{2}}[-\overline{a}_0(-aa_0+a_2)+\\\indent\hspace{1cm}a\overline{a}_2(-aa_0+a_2)],0\cdots, a-a_0\overline{a}_2),$\\
\vspace{.3cm}
$G_{n}^{T}=(0,-\overline{a}_2(-a_0\overline{a}_2+a),0,0,\cdots);$\\
$F_{n+1}^{T}=((-a)^{\frac{n-2}{2}}[-\overline{a}_0(-aa_0+a_2)],0,(-a)^{\frac{n-4}{2}}[-\overline{a}_0(-aa_0+a_2)],0,\cdots,1-a_0\overline{a}_0,0),$\\
\vspace{.3cm}
$G_{n+1}^{T}=(-\overline{a}_0(-a_0\overline{a}_2+a),0,0,\cdots,0)$ and\\
$F_{n+2}^{T}=(0,(-a)^{\frac{n-2}{2}}[-\overline{a}_0(-aa_0+a_2)],0,(-a)^{\frac{n-4}{2}}[-\overline{a}_0(-aa_0+a_2)],0,\\\indent\hspace{1cm}(-a)^{\frac{n-6}{2}}[-\overline{a}_0(-aa_0+a_2)],0,\cdots,1-a_0\overline{a}_0),$\\
$G_{n+2}^{T}=(0,-\overline{a}_0(-a_0\overline{a}_2+a),0,0,\cdots,0].$\\
In this case, $M_{n+2}$ is a sum of $144$ determinants out of which only $49$ listed in Table 4 are non-zero. Adding values of all these determinants we obtain that,\\
\vspace{.1cm}
 when $\displaystyle\frac{n}{2}$ is odd,
\indent $M_{n+2}=det[\overline{B}_{n+2}^{\,\,T}-A_{n+2}\,B_{n+2}^{-1}\,\overline{A}_{n+2}^{\,\,T}]\\
\vspace{.1cm}
\indent\hspace{3.9cm}=\displaystyle L_{n+2}\,[16(1-\cos\theta)^2]$\\
\vspace{.1cm}
\indent\hspace{4cm}$>0,$  for $\theta\not=2m\pi,\, m\in \mathbb{Z}$ and \\
\vspace{.1cm}
when $\displaystyle \frac{n}{2}$ is even,
\indent $M_{n+2}=\displaystyle L_{n+2}\,[16(1+\cos\theta)^2]$\\
\indent\hspace{4.1cm}$>0,$  for $\theta\not=(2m+1)\pi,\, m\in \mathbb{Z} .$\\
Now we discuss the cases when $\theta=(2m+1)\pi,\,\,2m\pi\,\,{\rm and}\,\, (2m+1)\pi/2, m\in \mathbb{Z}.$\\
When $\theta=(2m+1)\pi, m\in \mathbb{Z},$ and both $n, n/2$ are even, then the polynomial $p$ given in (6) reduces to:\\
\indent\hspace{2cm}$ p(z)= z^{n+2}+az^n- \frac{1}{2}\left(2+an-n\right)z^2-\frac{1}{2}\left(2a+an-n\right).$\\
\indent\hspace{2.8cm}$=(z^2+1)[z^n+(a-1)z^{n-2}-(a-1)z^{n-4}+\cdots+(a-1)z^2-\frac{1}{2}(2a+an-n)]$\\
$\indent\hspace{2.7cm}=(z^2+1)q(z).$\\
It suffices to show that zeros of $q$ lie inside or on $|z|=1$. Since $|\frac{1}{2}(2a+an-n)|<1$ whenever  $a \in \left({(n-2)}/{(n+2)},1\right)-\{n/(n+2)\}$, by applying Lemma \vspace{.1cm}
2.1 on $q,$ after comparing it with $t$, we get \\
$\indent\displaystyle q_1(z)=\frac{\overline{a}_nq(z)-a_0q^*(z)}{z}$\\
$\indent\hspace{.8cm}=(1-a)(1+\frac{1}{2}(2a+an-n))z\left\{(1+\frac{n}{2})z^{n-2}-z^{n-4}+z^{n-6}-\cdots+z^2-1\right\}.$\\
$\indent\hspace{.8cm}=(1-a)(1+\frac{1}{2}(2a+an-n))z r_1(z)$,\\ where \\
$\indent\hspace{0cm}r_1(z)=(1+\frac{n}{2})z^{n-2}-z^{n-4}+z^{n-6}-\cdots+z^2-1.$\\
The number of zeros of $q_1$ inside the unit circle is one less than the number of zeros of $q$ inside the unit circle and so the number of zeros of $r_1$ inside the unit circle is two less than the number of zeros of $q$ inside the unit circle. As $1<1+\frac{n}{2},$ therefore by applying Lemma 2.1 again on $r_1$ we get\\
$\indent\displaystyle q_2(z)= z \frac{n}{2}[(2+\frac{n}{2})z^{n-4}-z^{n-6}+z^{n-8}-\cdots-z^2+1].$\\
$\indent\displaystyle\hspace{.8cm} = z r_2(z)$.\\
 The number of zeros of $r_2$ inside the circle $|z|=1$ is four less than the number of zeros of $q$ inside the unit circle. Similarly using Lemma 2.1 on $r_2$ we have,\\
$\indent\hspace{1cm}q_3(z)= z(1+\frac{n}{2})[(3+\frac{n}{2})z^{n-6}-z^{n-8}+z^{n-10}-\cdots+z^2-1].$\\
Continuing in this manner we derive that \\
$\indent\hspace{-.1cm}\displaystyle q_k(z)= z\left[(k-2)+\frac{n}{2}\right]\left\{(k+\frac{n}{2})z^{n-2k}-z^{n-2(k+1)}+\cdots+(-1)^{k+1} z^2+(-1)^{k}\right\},\\$
$\indent\hspace{-.8cm}\indent\displaystyle\hspace{.9cm} = z\, r_k(z),\,\, k=2,3,\cdots,(n/2-1)$.\\
The number of zeros of $r_k$ inside the unit circle is $2k$ less than the number of zeros of $q$ inside the unit circle and in particular, for $k=(n/2-1),$\\
$\indent\displaystyle q_{n/2-1}(z)=(n-3)\{(n-1)z^2-1\},$\\ which has $(n-2)$ less number of zeros inside the unit circle than the number of zeros of $q$ inside the unit circle. But the zeros of $q_{n/2-1}$ are $\displaystyle\pm\frac{1}{\sqrt{n-1}}$ which lie inside the unit circle $|z|=1$. Consequently all the zeros of $q$ lie inside $|z|=1$.\\
\indent When $\theta=2m\pi$, $n$ is even and ${n}/{2}$ is odd, then the polynomial $p(z)=(z^2+1)\eta(z),$ where\\
$\eta(z)=(z^2+1)[z^n+(a-1)z^{n-2}-(a-1)z^{n-4}+\cdots-(a-1)z^2+\frac{1}{2}(2a+an-n)].$\\
Proceeding on the same lines as above we see that all the zeros of $p$ lie inside or on the circle $|z|=1$.\\
\indent When $\displaystyle \theta=(2m+1){\pi}/{2}$ and $n$ is odd, then\\
(i) If $n=4s+1,\,\, s=1,2,3,\cdots$ in this case $p(z)=(z+i)\zeta(z)$ and \\
$\zeta(z)=[z^{n+1}-iz^{n}+(a-1)z^{n-1}-i(a-1)z^{n-2}-(a-1)z^{n-3}+\cdots+\frac{i}{2}(2a+an-n)+\frac{1}{2}(2a+an-n)].$\\
(ii) If $n=4s-1,\,\, s=2,3,4,\cdots $ in this case
$p(z)=(z-i)\xi(z)$ here \\
$\xi(z)=[z^{n+1}+iz^{n}+(a-1)z^{n-1}+i(a-1)z^{n-2}-(a-1)z^{n-3}-i(a-1)z^{n-4}+\cdots+\\\indent\hspace{1cm} \frac{i}{2}(2a+an-n)-\frac{1}{2}(2a+an-n)].$\\
Again proceeding as above we conclude that all the zeros of $p$ lie inside or on the circle $|z|=1$.\\
\indent Lastly, we take up the case when $a=n/(n+2)$. In this case
$$\displaystyle\widetilde{\omega}(z)=\displaystyle z^{n+2}e^{2i\theta}\left[\frac{(n+2)z^{n}+n z^{n-2}+ 2e^{-i\theta}}{2e^{i\theta}z^n+nz^2+(n+2)}\right]=\displaystyle z^{n+2}e^{2i\theta}\frac{\beta(z)}{\beta^*(z)},$$
\noindent where,\,\, $\beta(z)=(n+2)z^{n}+n z^{n-2}+ 2e^{-i\theta}$. To prove that $|\widetilde{\omega}(z)|<1$ in $E$ it suffices to show that all the zeros of $\beta$ lie inside or on $|z|=1$. The repeated application of Lemma 2.1 (as in the exceptional cases discussed above) allows us to conclude that this is infact true. We skip the details for want of space.\\
\indent This completes the proof.
\end{proof}
\bigskip

\begin{table}[H]
\caption{ Values of non-zero determinants in Case 3.}
\bigskip
\indent\hspace{2cm}\begin{tabular}{|c|l|l|}
\hline
\textbf{\emph{Determinant}}& \hspace{1.5cm}\textbf{\emph{Value}}\\
\hline
 $det[F_1E_2\cdots F_{n-1}E_nF_{n+1}] $&$L_{n+1}[(2n+1)^2] $\\
    \hline
$det[H_1E_2\cdots F_{n-1}E_nF_{n+1}]$  &$L_{n+1}[-a(2n+1)(2n-1)]$ \\
    \hline
$det[G_1E_2\cdots F_{n-1}E_nG_{n+1}] $ &$L_{n+1}[\frac{1}{2}(2n-1)(n+1)(2a+an-n)]$ \\
    \hline
$ det[G_1E_2\cdots G_{n-1}E_nF_{n+1}] $  & $L_{n+1}[\frac{1}{2}(2n-1)(n-1)(n-2-an)] $ \\
    \hline
\end{tabular}
\end{table}
\begin{table}[H]
\caption{Values of non-zero determinants in Case 4.}
\bigskip
\begin{tabular}{|c|l|l|}
\hline
 \textbf{\emph{Determinant}}& \multicolumn{2}{|c|}{\textbf{\emph{Value}}}\\ \cline{2-3}
  &\hspace{1cm}$\displaystyle when\,\,\bf{\frac{n}{2}}$\,\, $is\,\, odd$ &\hspace{1cm} $\displaystyle when\,\,\bf{\frac{n}{2}} $\,\, $is\,\, even$\\
  \hline
 $ det[F_1E_2\cdots F_{n-1}E_nF_{n+1}] $& $L_{n+1}[2n(2n+2)] $&$L_{n+1}[2n(2n+2)] $\\
    \hline
 $ det[F_1E_2\cdots F_{n-1}E_nG_{n+1}] $  & $L_{n+1}[e^{i\theta}(2a+an-n)n^2] $&$L_{n+1}[-e^{i\theta}(2a+an-n)n^2]$ \\
    \hline
$ det[F_1E_2\cdots G_{n-1}E_nF_{n+1}] $  & $L_{n+1}[e^{i\theta}(n-an-2)n(n+2)] $&$L_{n+1}[-e^{i\theta}(n-an-2)n(n+2)] $ \\
    \hline
$ det[G_1E_2\cdots F_{n-1}E_nF_{n+1}] $  & $L_{n+1}[-e^{-i\theta} 4n] $&$L_{n+1}[e^{-i\theta} 4n]$ \\
    \hline
$ det[G_1E_2\cdots G_{n-1}E_nF_{n+1}] $& $L_{n+1}[n(n-2)(n-an-2)] $&$L_{n+1}[n(n-2)(n-an-2)] $\\
    \hline
$ det[G_1E_2\cdots F_{n-1}E_nG_{n+1}] $& $L_{n+1}[(2a+an-n)n^2] $&$L_{n+1}[(2a+an-n)n^2] $\\
    \hline
$ det[H_1E_2\cdots F_{n-1}E_nF_{n+1}] $  & $L_{n+1}[-a (2n)^2] $&$L_{n+1}[-a (2n)^2]$ \\
    \hline
\end{tabular}
\end{table}
\begin{table}[H]
\caption {Values of non-zero determinants in Case 5.}
\bigskip
\indent\hspace{1cm}\begin{tabular}{|c|l|l|}
  \hline
\textbf{\emph{Determinant}}& \hspace{1.5cm}\textbf{\emph{Value}}\\
  \hline
 $ det[F_1F_2\cdots F_{n-1}F_nF_{n+1}F_{n+2}] $& $L_{n+2}\,[(2n+1)(2n+3)] $\\
    \hline
 $det[F_1F_2\cdots F_{n-1}F_nG_{n+1}G_{n+2}] $  & $L_{n+2}\,[\frac{e^{2i\theta}}{4}n^2(n-2a-an)^2] $ \\
    \hline
$ det[F_1F_2\cdots F_{n-1}G_nG_{n+1}F_{n+2}] $  & $L_{n+2}\,[-\frac{e^{2i\theta}}{4}n(n+2)(n-2a-an)(n-2-an)] $ \\
    \hline
$  det[F_1F_2\cdots G_{n-1}F_nF_{n+1}G_{n+2}] $  & $L_{n+2}\,[-\frac{e^{2i\theta}}{4}n(n+2)(n-2a-an)(n-2-an)]  $ \\
    \hline
$ det[F_1F_2\cdots G_{n-1}G_nF_{n+1}F_{n+2}] $& $L_{n+2}\,[\frac{e^{2i\theta}}{4}(n+2)^2(n-2-an)^2]  $\\
    \hline
$ det[F_1G_2\cdots F_{n-1}F_nF_{n+1}G_{n+2}] $  & $L_{n+2}\,[-\frac{1}{2}(n-2a-an)(2n-1)(n+3)] $ \\
    \hline
$ det[F_1G_2\cdots F_{n-1}G_nF_{n+1}F_{n+2}] $  & $L_{n+2}\,[\frac{1}{2}(n-2-an)(2n-1)(n+1)] $ \\
\hline
$ det[F_1H_2\cdots F_{n-1}F_nF_{n+1}F_{n+2}] $  & $L_{n+2}\,[-a(2n-1)(2n+3)] $ \\
    \hline
\end{tabular}
\end{table}
\begin{table}[H]
\begin{tabular}{|c|l|l|}
\hline
$ det[G_1F_2\cdots F_{n-1}F_nG_{n+1}F_{n+2}] $  & $L_{n+2}\,[-\frac{1}{2}(n-2a-an)(2n+1)(n+1)] $ \\
    \hline
$ det[G_1F_2\cdots G_{n-1}F_nF_{n+1}F_{n+2}] $  & $L_{n+2}\,[\frac{1}{2}(n-2-an)(2n+1)(n-1)] $ \\
    \hline
$ det[G_1G_2\cdots F_{n-1}F_nF_{n+1}F_{n+2}] $  & $L_{n+2}\,[4e^{-2i\theta}] $ \\
    \hline
 $ det[G_1G_2\cdots F_{n-1}F_nG_{n+1}G_{n+2}] $  & $L_{n+2}\,[\frac{1}{4}(n-2a-an)^2(n^2-1)] $ \\
    \hline
 $ det[G_1G_2\cdots F_{n-1}G_nG_{n+1}F_{n+2}] $  & $L_{n+2}\,[-\frac{1}{4}(n-2a-an)(n-2-an)(n-1)^2] $ \\
    \hline
  $ det[G_1G_2\cdots G_{n-1}F_nF_{n+1}G_{n+2}] $  & $L_{n+2}\,[-\frac{1}{4}(n-2a-an)(n-2-an)(n-3)(n+1)] $ \\
    \hline
  $ det[G_1G_2\cdots G_{n-1}G_nF_{n+1}F_{n+2}] $  & $L_{n+2}\,[\frac{1}{4}(n-2-an)^2(n-3)(n-1)] $ \\
    \hline
  $ det[G_1H_2\cdots F_{n-1}F_nG_{n+1}F_{n+2}] $  & $L_{n+2}\,[\frac{1}{2}a(n-2a-an)(n-1)(2n+1)] $ \\
    \hline
  $ det[G_1H_2\cdots G_{n-1}F_nF_{n+1}F_{n+2}] $  & $L_{n+2}\,[-\frac{1}{2}a(n-2-an)(n-3)(2n+1)] $ \\
    \hline
  $ det[H_1F_2\cdots F_{n-1}F_nF_{n+1}F_{n+2}] $  & $L_{n+2}\,[-a(2n+1)^2] $ \\
    \hline
 $ det[H_1G_2\cdots F_{n-1}F_nF_{n+1}G_{n+2}] $  & $L_{n+2}\,[\frac{1}{2}a(n-2a-an)(n+1)(2n-1)] $ \\
    \hline
 $ det[H_1G_2\cdots F_{n-1}G_nF_{n+1}F_{n+2}] $  & $L_{n+2}\,[-\frac{1}{2}a(n-2-an)(n-1)(2n-1)] $ \\
    \hline
$ det[H_1H_2\cdots F_{n-1}F_nF_{n+1}F_{n+2}] $  & $L_{n+2}\,[a^2(2n+1)(2n-1)] $ \\
   \hline
\end{tabular}
\end{table}
\begin{table}[H]
\caption {Values of non-zero determinants in Case 6.}
\bigskip
\indent\hspace{-2.6cm}\begin{tabular}{|p{5cm}|p{7cm}|p{7cm}|}
\hline
 \textbf{\emph{Determinant}}& \multicolumn{2}{|c|}{\textbf{\emph{Value}}}\\ \cline{2-3}
  &\hspace{1cm}$\displaystyle when\,\,\bf{\frac{n}{2}}$\,\, $is\,\,odd$ &\hspace{1cm} $\displaystyle when\,\,\bf{\frac{n}{2}} $\,\, $is\,\, even$\\
  \hline
 $ det[F_1F_2\cdots F_{n-1}F_nF_{n+1}F_{n+2}] $& $L_{n+2}\,[(2n+2)^2] $&$L_{n+2}\,[(2n+2)^2] $\\
    \hline
 $ det[F_1F_2\cdots F_{n-1}F_nF_{n+1}G_{n+2}] $& $L_{n+2}\,[-\frac{e^{i\theta}}{2}(n-2a-an)n(2n+2)] $&$L_{n+2}\,[\frac{e^{i\theta}}{2}(n-2a-an)n(2n+2)]$\\
    \hline
 $ det[F_1F_2\cdots F_{n-1}F_nG_{n+1}F_{n+2}] $& $L_{n+2}\,[-\frac{e^{i\theta}}{2}(n-2a-an)n(2n+2)] $&$ L_{n+2}\,[\frac{e^{i\theta}}{2}(n-2a-an)n(2n+2)]$\\
    \hline
$ det[F_1F_2\cdots F_{n-1}F_nG_{n+1}G_{n+2}] $& $L_{n+2}\,[\frac{e^{2i\theta}}{4}(n-2a-an)^2n^2] $&$ L_{n+2}\,[\frac{e^{2i\theta}}{4}(n-2a-an)^2n^2]$\\
    \hline
$ det[F_1F_2\cdots F_{n-1}G_nF_{n+1}F_{n+2}] $& $L_{n+2}\,[\frac{e^{i\theta}}{2}(n-2-an)(2n+2)(n+2)] $&$L_{n+2}\,[-\frac{e^{i\theta}}{2}(n-2-an)(2n+2)(n+2)] $\\
    \hline
$ det[F_1F_2\cdots F_{n-1}G_nG_{n+1}F_{n+2}] $& $L_{n+2}\,[-\frac{e^{2i\theta}}{4}(n-2a-an)(n-2-an)(n+2)n] $& $L_{n+2}\,[-\frac{e^{2i\theta}}{4}(n-2a-an)(n-2-an)(n+2)n] $\\
    \hline
$ det[F_1F_2\cdots G_{n-1}F_nF_{n+1}F_{n+2}] $& $L_{n+2}\,[\frac{e^{i\theta}}{2}(n-2-an)(2n+2)(n+2)] $&$L_{n+2}\,[-\frac{e^{i\theta}}{2}(n-2-an)(2n+2)(n+2)] $\\
    \hline
$ det[F_1F_2\cdots G_{n-1}F_nF_{n+1}G_{n+2}] $& $L_{n+2}\,[-\frac{e^{2i\theta}}{4}(n-2a-an)(n-2-an)(n+2)n] $& $L_{n+2}\,[-\frac{e^{2i\theta}}{4}(n-2a-an)(n-2-an)(n+2)n]$\\
    \hline
$ det[F_1F_2\cdots G_{n-1}G_nF_{n+1}F_{n+2}] $& $L_{n+2}\,[\frac{e^{2i\theta}}{4}(n-2-an)^2(n+2)^2] $&$ L_{n+2}\,[\frac{e^{2i\theta}}{4}(n-2-an)^2(n+2)^2]$\\
    \hline
$ det[F_1G_2\cdots F_{n-1}F_nF_{n+1}F_{n+2}] $& $L_{n+2}\,[-2e^{-i\theta}(2n+2)] $&$ L_{n+2}\,[2e^{-i\theta}(2n+2)]$\\
    \hline
$ det[F_1G_2\cdots F_{n-1}F_nF_{n+1}G_{n+2}] $& $L_{n+2}\,[-(n-2a-an)n(n+1)] $&$ L_{n+2}\,[-(n-2a-an)n(n+1)]$\\
    \hline
$ det[F_1G_2\cdots F_{n-1}F_nG_{n+1}F_{n+2}] $& $L_{n+2}\,[(n-2a-an)n] $&$ L_{n+2}\,[(n-2a-an)n]$\\
    \hline
$ det[F_1G_2\cdots F_{n-1}F_nG_{n+1}G_{n+2}] $& $L_{n+2}\,[\frac{e^{i\theta}}{4}(n-2a-an)^2n^2] $&$ L_{n+2}\,[-\frac{e^{i\theta}}{4}(n-2a-an)^2n^2]$\\
 \hline
$ det[F_1G_2\cdots F_{n-1}G_nF_{n+1}F_{n+2}] $& $L_{n+2}\,[(n-2-an)(n-2)(n+1)] $&$ L_{n+2}\,[(n-2-an)(n-2)(n+1)]$\\
    \hline
$ det[F_1G_2\cdots F_{n-1}G_nG_{n+1}F_{n+2}] $& $L_{n+2}\,[-\frac{e^{i\theta}}{4}(n-2a-an)(n-2-an)(n-2)n] $& $L_{n+2}\,[\frac{e^{2i\theta}}{4}(n-2a-an)(n-2-an)(n-2)n] $\\
    \hline
$ det[F_1G_2\cdots G_{n-1}F_nF_{n+1}F_{n+2}] $& $L_{n+2}\,[-(n-2-an)(n+2)] $&$ L_{n+2}\,[-(n-2-an)(n+2)]$\\
    \hline
$ det[F_1G_2\cdots G_{n-1}F_nF_{n+1}G_{n+2}] $& $L_{n+2}\,[-\frac{e^{i\theta}}{4}(n-2a-an)(n-2-an)(n+2)n] $&$L_{n+2}\,[\frac{e^{i\theta}}{4}(n-2a-an)(n-2-an)(n+2)n] $\\
\hline
\end{tabular}
\end{table}
\indent\hspace{-3.2cm}\begin{tabular}{|p{5cm}|p{7cm}|p{7cm}|}
\hline
$ det[F_1G_2\cdots G_{n-1}G_nF_{n+1}F_{n+2}] $& $L_{n+2}\,[\frac{e^{i\theta}}{4}(n-2-an)^2(n-2)(n+2)] $&$L_{n+2}\,[-\frac{e^{i\theta}}{4}(n-2-an)^2(n-2)(n+2)] $\\
 \hline
$ det[F_1H_2\cdots F_{n-1}F_nF_{n+1}F_{n+2}] $& $L_{n+2}\,[-4an(n+1)] $&$L_{n+2}\,[-4an(n+1)] $\\
    \hline
$ det[F_1H_2\cdots F_{n-1}F_nG_{n+1}F_{n+2}] $& $L_{n+2}\,[e^{i\theta}a(n-2a-an)n^2] $&$L_{n+2}\,[-e^{i\theta}a(n-2a-an)n^2] $\\
    \hline
$ det[F_1H_2\cdots G_{n-1}F_nF_{n+1}F_{n+2}] $& $L_{n+2}\,[-e^{i\theta}a(n-2-an)n(n+2)] $&$L_{n+2}\,[e^{i\theta}a(n-2-an)n(n+2)] $\\
    \hline
$ det[G_1F_2\cdots F_{n-1}F_nF_{n+1}F_{n+2}] $& $L_{n+2}\,[-4e^{-i\theta}(n+1)] $&$L_{n+2}\,[4e^{-i\theta}(n+1)] $\\
    \hline
$ det[G_1F_2\cdots F_{n-1}F_nF_{n+1}G_{n+2}] $& $L_{n+2}\,[(n-2a-an)n] $&$L_{n+2}\,[(n-2a-an)n] $\\
    \hline
$ det[G_1F_2\cdots F_{n-1}F_nG_{n+1}F_{n+2}] $& $L_{n+2}\,[-(n-2a-an)n(n+1)] $&$L_{n+2}\,[-(n-2a-an)n(n+1)] $\\
    \hline
$ det[G_1F_2\cdots F_{n-1}F_nG_{n+1}G_{n+2}] $& $L_{n+2}\,[\frac{e^{i\theta}}{4}(n-2a-an)^2n^2] $&$L_{n+2}\,[-\frac{e^{i\theta}}{4}(n-2a-an)^2n^2] $\\
    \hline
$ det[G_1F_2\cdots F_{n-1}G_nF_{n+1}F_{n+2}] $& $L_{n+2}\,[-(n-2-an)(n+2)] $&$L_{n+2}\,[-(n-2-an)(n+2)] $\\
    \hline
$ det[G_1F_2\cdots F_{n-1}G_nG_{n+1}F_{n+2}] $& $L_{n+2}\,[-\frac{e^{i\theta}}{4}(n-2a-an)(n-2-an)n(n+2)] $&$L_{n+2}\,[\frac{e^{i\theta}}{4}(n-2a-an)(n-2-an)n(n+2)] $\\
    \hline
$ det[G_1F_2\cdots G_{n-1}F_nF_{n+1}F_{n+2}] $& $L_{n+2}\,[(n-2-an)(n-2)(n+1)] $&$L_{n+2}\,[(n-2-an)(n-2)(n+1)] $\\
    \hline
$ det[G_1F_2\cdots G_{n-1}F_nF_{n+1}G_{n+2}] $& $L_{n+2}\,[-\frac{e^{i\theta}}{4}(n-2a-an)(n-2-an)n(n-2)] $&$L_{n+2}\,[\frac{e^{i\theta}}{4}(n-2a-an)(n-2-an)n(n-2)] $\\
    \hline
$ det[G_1F_2\cdots G_{n-1}G_nF_{n+1}F_{n+2}] $& $L_{n+2}\,[\frac{e^{i\theta}}{4}(n-2-an)^2(n+2)(n-2)] $&$L_{n+2}\,[-\frac{e^{i\theta}}{4}(n-2-an)^2(n+2)(n-2)] $\\
    \hline
$ det[G_1G_2\cdots F_{n-1}F_nF_{n+1}F_{n+2}] $& $L_{n+2}\,[4e^{-2i\theta}] $&$L_{n+2}\,[4e^{-2i\theta}] $\\
    \hline
$ det[G_1G_2\cdots F_{n-1}F_nF_{n+1}G_{n+2}] $& $L_{n+2}\,[e^{-i\theta}(n-2a-an)n] $&$L_{n+2}\,[-e^{-i\theta}(n-2a-an)n] $\\
    \hline
$ det[G_1G_2\cdots F_{n-1}F_nG_{n+1}F_{n+2}] $& $L_{n+2}\,[e^{-i\theta}(n-2a-an)n] $&$L_{n+2}\,[-e^{-i\theta}(n-2a-an)n] $\\
    \hline
$ det[G_1G_2\cdots F_{n-1}F_nG_{n+1}G_{n+2}] $& $L_{n+2}\,[\frac{1}{4}(n-2a-an)^2n^2] $&$L_{n+2}\,[\frac{1}{4}(n-2a-an)^2n^2] $\\
    \hline
$ det[G_1G_2\cdots F_{n-1}G_nF_{n+1}F_{n+2}] $& $L_{n+2}\,[-e^{-i\theta}(n-2-an)(n-2)] $&$L_{n+2}\,[e^{-i\theta}(n-2-an)(n-2)] $\\
    \hline
$ det[G_1G_2\cdots F_{n-1}G_nG_{n+1}F_{n+2}] $& $L_{n+2}\,[-\frac{1}{4}(n-2a-an)(n-2-an)n(n-2)] $&$L_{n+2}\,[-\frac{1}{4}(n-2a-an)(n-2-an)n(n-2)] $\\
    \hline
$ det[G_1G_2\cdots G_{n-1}F_nF_{n+1}F_{n+2}] $& $L_{n+2}\,[-e^{-i\theta}(n-2-an)(n-2)] $&$L_{n+2}\,[e^{-i\theta}(n-2-an)(n-2)] $\\
    \hline
$ det[G_1G_2\cdots G_{n-1}F_nF_{n+1}G_{n+2}] $& $L_{n+2}\,[-\frac{1}{4}(n-2a-an)(n-2-an)n(n-2)] $&$L_{n+2}\,[-\frac{1}{4}(n-2a-an)(n-2-an)n(n-2)] $\\
    \hline
$ det[G_1G_2\cdots G_{n-1}G_nF_{n+1}F_{n+2}] $& $L_{n+2}\,[\frac{1}{4}(n-2-an)^2(n-2)^2] $&$L_{n+2}\,[\frac{1}{4}(n-2-an)(n-2)^2] $\\
    \hline
$ det[G_1H_2\cdots F_{n-1}F_nF_{n+1}F_{n+2}] $& $L_{n+2}\,[4e^{-i\theta}an] $&$L_{n+2}\,[-4e^{-i\theta}an] $\\
    \hline
$ det[G_1H_2\cdots F_{n-1}F_nG_{n+1}F_{n+2}] $& $L_{n+2}\,[a(n-2a-an)n^2] $&$L_{n+2}\,[a(n-2a-an)n^2] $\\
    \hline
$ det[G_1H_2\cdots G_{n-1}F_nF_{n+1}F_{n+2}] $& $L_{n+2}\,[-a(n-2-an)n(n-2)] $&$L_{n+2}\,[-a(n-2-an)n(n-2)] $\\
    \hline
$ det[H_1F_2\cdots F_{n-1}F_nF_{n+1}F_{n+2}] $& $L_{n+2}\,[-2an(2n+2)] $&$L_{n+2}\,[-2an(n+2)] $\\
    \hline
$ det[H_1F_2\cdots F_{n-1}F_nF_{n+1}G_{n+2}] $& $L_{n+2}\,[e^{i\theta}an^2(n-2a-an)] $&$L_{n+2}\,[-e^{i\theta}an^2(n-2a-an)] $\\
    \hline
$ det[H_1F_2\cdots F_{n-1}G_nF_{n+1}F_{n+2}] $& $L_{n+2}\,[-e^{i\theta}an(n+2)(n-2-an)] $&$L_{n+2}\,[e^{i\theta}an(n+2)(n-2-an)] $\\
    \hline
$ det[H_1G_2\cdots F_{n-1}F_nF_{n+1}F_{n+2}] $& $L_{n+2}\,[4e^{-i\theta}an] $&$L_{n+2}\,[-4e^{-i\theta}an] $\\
    \hline
$ det[H_1G_2\cdots F_{n-1}F_nF_{n+1}G_{n+2}] $& $L_{n+2}\,[an^2(n-2a-an)] $&$L_{n+2}\,[an^2(n-2a-an)] $\\
    \hline
$ det[H_1G_2\cdots F_{n-1}G_nF_{n+1}F_{n+2}] $& $L_{n+2}\,[-an(n-2)(n-2-an)] $&$L_{n+2}\,[-an(n-2)(n-2-an)] $\\
    \hline
$ det[H_1H_2\cdots F_{n-1}F_nF_{n+1}F_{n+2}] $& $L_{n+2}\,[4a^2n^2] $&$L_{n+2}\,[4a^2n^2] $\\
    \hline
\end{tabular}
\vspace{3cm}

\noindent{\emph{Acknowledgement: The first author is thankful to the Council of Scientific and Industrial Research, New Delhi, for financial support vide grant no. 09/797/0006/2010 EMR-1.}}

{

\end{document}
\begin{thebibliography}{999}
{\setlength{\baselineskip}
{0.6\baselineskip}
\addcontentsline{toc}{chapter}{References}
{\footnotesize
\bibitem{.} J. Clunie and T. Sheil-Small, \emph{Harmonic univalent functions}, {Ann. Acad. Sci. Fenn. Ser. A. I Math.}, {\bf9} (1984), 3-25. \label{cl and sh}
\bibitem{.} M. Dorff, \emph{Convolutions of planar harmonic convex mappings,} {Comp. Var. Theory Appl.} {\bf45}(3) (2001), 263-271.\label{do}
\bibitem{.} M. Dorff, M. Nowak and M. Woloszkiewicz, \emph{Convolutions of harmonic convex mappings},  {Comp. Var. Elliptic Eqn.}, {\bf 57}(5) (2012), 489-503. \label{do and no}
\bibitem{.} R. Kumar, M. Dorff, S.Gupta and S.Singh, \emph{Convolution properties of some harmonic mappings in the right half-plane}, see arXiv:1206.4364.\label{ku and do}
\bibitem{.} R. Kumar, S.Gupta, S.Singh and M. Dorff, \emph{An application of Cohn's rule to convolutions of univalent harmonic mappings}, see arXiv:1306.5375.\label{ku and gu}
\bibitem{.} Q.I. Rahman and G. Schmeisser, \emph{Analytic Theory of Polynomials}, London Mathematical Society Monographs New Series, Vol. 26, Oxford University Press, Oxford, 2002.\label{ra and sc}}




}
\end{thebibliography}
